\newtheorem{theorem}{Theorem}
\newtheorem{corollary}[theorem]{Corollary}
\newtheorem{definition}[theorem]{Definition}
\newtheorem{example}[theorem]{Example}
\newtheorem{lemma}[theorem]{Lemma}
\newtheorem{remark}[theorem]{Remark}
\renewenvironment{proof}[1][Proof]{\noindent\textbf{#1.} }{\hfill$\square$}
\begin{document}

\date{December 9, 2013}
\title{{\bf Robust Solutions to Multi-Objective Linear Programs with Uncertain
Data}\thanks{%
This research was partially supported by the MICINN of Spain, Grant
MTM2011-29064-C03-02, and the Australian Research Council, Grant
DP120100467. }}
\author{M.A. Goberna\thanks{%
Corresponding author. Tel.: +34 965903533. Fax: +34 965903531.} \thanks{%
Dept. of Statistics and Operations Research, Alicante University, 03071
Alicante, Spain.}, V. Jeyakumar\thanks{%
Dept. of Applied Mathematics, University of New South Wales, Sydney 2052,
Australia. }, G. Li\footnotemark[4], and J. Vicente-P\'{e}rez\footnotemark[4]
}
\maketitle

\begin{abstract}
In this paper we examine multi-objective linear programming problems in the
face of data uncertainty both in the objective function and the constraints.
First, we derive a formula for radius of robust feasibility guaranteeing
constraint feasibility for all possible uncertainties within a specified
uncertainty set under affine data parametrization. We then present a
complete characterization of robust weakly effcient solutions that are
immunized against rank one objective matrix data uncertainty. We also
provide classes of commonly used constraint data uncertainty sets under
which a robust feasible solution of an uncertain multi-objective linear
program can be numerically checked whether or not it is a robust weakly
efficient solution.
\end{abstract}

\let\thefootnote\relax\footnotetext{%
E-mail addresses: mgoberna@ua.es (M.A. Goberna), v.jeyakumar@unsw.edu.au (V.
Jeyakumar), g.li@unsw.edu.au (G. Li), jose.vicente@ua.es (J.
Vicente-P\'erez).}

{\small \textbf{Keywords.} Robust optimization. Multi-objective linear
programming. Robust feasibility. Robust weakly efficient solutions.}

\section{Introduction}

\label{sec:1}

Consider the deterministic multi-objective linear programming problem
\begin{equation*}
(\overline{P})\quad \text{V-}\min \left\{ \overline{C}x:\overline{a}%
_{j}^{\top }x\geq \overline{b}_{j},\,j=1,\ldots ,p\right\}
\end{equation*}
where V-$\min $ stands for \emph{vector minimization}, $\overline{C}$\ is a
real $m\times n$ matrix called \emph{objective matrix}, $x\in $ $\mathbb{R}%
^{n}$ is the decision variable, and $\left( \overline{a}_{j},\overline{b}%
_{j}\right) \in \mathbb{R}^{n+1}$, for $\,j=1,\ldots ,p$, are the constraint
input data of the problem. The problem $(\overline{P})$ has been extensively
studied in the literature (see e.g. the overviews \cite{BDMS08} and \cite%
{Eh05}), where perfect information is often assumed (that is, accurate
values for the input quantities or parameters), despite the reality that
such precise knowledge is rarely available in practice for real-world
optimization problems.

The data of real-world optimization problems are often uncertain (that is,
they are not known exactly at the time of the decision) due to estimation
errors, prediction errors or lack of information. Scalar uncertain
optimization problems have been traditionally treated via sensitivity
analysis, which estimates the impact of small perturbations of the data in
the optimal value, while robust optimization, which provides a deterministic
framework for uncertain problems (\cite{BEN09},\cite{EL97}), has recently
emerged as a powerful alternative approach.

Particular types of uncertain multi-objective linear programming problems
have been studied, e.g. \cite{Sitarz08} considers changes in one objective
function via sensitivity analysis, \cite{GDH12} and \cite{PLP13} consider
changes in the whole objective function $x\mapsto \overline{C}x$, and \cite%
{GJLP} deals with changes in the constraints, the latter three works using
different robustness approaches. The purpose of the present work is to study
multi-objective linear programming problems in the face of data uncertainty
both in the objective function and constraints from a robustness perspective.

\smallskip

Following the robust optimization framework, the multi-objective problem $(%
\overline{P})$ in the face of \emph{data uncertainty} both in the objective
matrix and in the data of the constraints can be captured by a parameterized
multi-objective linear programming problem of the form
\begin{equation*}
(P_{w})\quad \text{V-}\min \left\{ Cx:a_{j}^{\top }x\geq b_{j},\,j=1,\ldots
,p\right\}
\end{equation*}%
where the input data, the rows of $C$ and $\left( a_{j},b_{j}\right) ,$ $%
j=1,\ldots ,p,$ are uncertain vectors. The sets $\mathcal{U}$ and $\mathcal{V%
}_{j}$, $j=1,\ldots,p$, are specified uncertainty sets that are bounded, but
often infinite sets, $C\in \mathcal{U}\subset \mathbb{R}^{m\times n}$ and $%
(a_{j},b_{j})\in \mathcal{V}_{j}\subset \mathbb{R}^{n+1},j=1,\ldots ,p$. So,
the uncertain parameter is $w:=(C,(a_{1},b_{1}),...,(a_{p},b_{p}))\in
\mathcal{W}:=\mathcal{U}\times \prod_{j=1}^{p}\mathcal{V}_{j}$. By enforcing
the constraints for all possible uncertainties within $\mathcal{V}%
_{j},\,j=1,\ldots ,p$, the uncertain problem becomes the following uncertain
multi-objective linear semi-infinite programming problem
\begin{equation*}
(P_{C})\quad \text{V-}\min \left\{ Cx:a_{j}^{\top }x\geq b_{j},\;\forall
(a_{j},b_{j})\in \mathcal{V}_{j},\,j=1,\ldots ,p\right\}
\end{equation*}%
where the data uncertainty occurs only in the objective function and
\begin{equation*}
X:=\{x\in \mathbb{R}^{n}:a_{j}^{\top }x\geq b_{j},\;\forall (a_{j},b_{j})\in
\mathcal{V}_{j},\,j=1,\ldots ,p\},
\end{equation*}%
is the robust feasible set of $(P_{w})$.

Following the recent work on robust linear programming (see \cite{BEN09}),
some of the key questions of multi-objective linear programming under data
uncertainty include:

\begin{itemize}
\item[I.] (\emph{Guaranteeing robust feasibility}) How to guarantee
non-emptiness of the robust feasible set $X$ for specified uncertainty sets $%
\mathcal{V}_j, \, j=1,\ldots,p$?

\item[II.] (\emph{Defining and identifying robust solutions}) How to define
and characterize a robust solution that is immunized against data
uncertainty for the uncertain multi-objective problem $(P_C)$?

\item[III.] (\emph{Numerical tractability of robust solutions}) For what
classes of uncertainty sets robust solutions can be numerically checked?
\end{itemize}

In this paper, we provide some answers to the above questions for the
multi-objective linear programming problem $(P_{C})$ in the face of data
uncertainty. In particular, we derive a formula for the radius of robust
feasibility guaranteeing non-emptiness of the robust feasible set $X$ of $%
(P_{C})$ under affinely parameterized data uncertainty. Then, we establish
complete characterizations of robust weakly efficient solutions under rank
one objective matrix data uncertainty (the same type of uncertainty
considered in \cite{PLP13} for efficient solutions of similar problems with
deterministic constraints). We finally provide classes of commonly used
uncertainty sets under which robust feasible solutions can be numerically
checked whether or not they are robust weakly efficient solutions. \smallskip

\section{Radius of robust feasibility}

\label{sec:2}

In this section, we first discuss the feasibility of our uncertain
multi-objective model under affine constraint data perturbations. In other
words, for any given matrix $\overline{C}\in \mathbb{R}^{m\times n}$, we
study the feasibility of the problem
\begin{equation*}
\begin{array}{lrl}
(P_{\alpha }) & \text{V-}\min & \overline{C}x \\
& \text{s.t.} & a_{j}^{\top }x\geq b_{j},\,\forall (a_{j},b_{j})\in \mathcal{%
V}_{j}^{\alpha },\,j=1,\ldots ,p,%
\end{array}%
\end{equation*}%
for $\alpha \geq 0,$\ where the uncertain set-valued mapping $\mathcal{V}%
_{j}^{\alpha }$ takes the form
\begin{equation}
\mathcal{V}_{j}^{\alpha }:=(\overline{a}_{j},\overline{b}_{j})+\alpha
\mathbb{B}_{n+1},\quad j=1,\ldots ,p,  \label{uta}
\end{equation}%
with $\{x\in \mathbb{R}^{n}:\overline{a}_{j}^{\top }x\geq \overline{b}%
_{j}\}\neq \emptyset $, and $\mathbb{B}_{n+1}$ denotes the closed unit ball
for the Euclidean norm $\left\Vert \cdot \right\Vert $ in $\mathbb{R}^{n+1}$.

Let $\mathcal{V}:=\prod_{j=1}^{p}\mathcal{V}_{j}$. The \emph{radius of
feasibility} associated with $\mathcal{V}_{j}$, $j=1,\ldots ,p,$ as in %
\eqref{uta} is defined to be
\begin{equation}
\rho (\mathcal{V}):=\sup \left\{ \alpha \in \mathbb{R}_{+}:(P_{\alpha })%
\text{ is feasible for }\alpha \right\} .  \label{4.4}
\end{equation}

To establish the formula for the radius of robust feasibility, we first note
a known useful characterization of feasibility of an infinite inequality
system in terms of the closure of the convex cone generated by its set of
coefficient vectors.

\begin{lemma}[{\protect\cite[Theorem 4.4]{GL98}}]
\label{lemma:16} Let $T$ be an arbitrary index set. Then, $\{x\in \mathbb{R}%
^{n}:a_{t}^{\top }x\geq b_{t},t\in T\}\neq \emptyset $ if and only if $%
(0_{n},1)\notin \mathrm{cl}\,\mathrm{cone}\{(a_{t},b_{t}):t\in T\}$.
\end{lemma}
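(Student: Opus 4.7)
The plan is to prove the two directions separately, with the forward implication following from a direct continuity/closedness argument and the reverse implication relying on the standard separation theorem for a point and a closed convex cone in $\mathbb{R}^{n+1}$.

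First I would handle necessity. Suppose $\bar{x}\in\mathbb{R}^n$ satisfies $a_t^\top\bar{x}\geq b_t$ for every $t\in T$, and consider the closed half-space
\[
H:=\{(a,b)\in\mathbb{R}^{n+1}: a^\top\bar{x}\geq b\}.
\]
Each generator $(a_t,b_t)$ lies in $H$, and since $H$ is a closed convex cone (it contains the origin and is closed under nonnegative scalar multiples and sums), it contains all finite nonnegative combinations of the generators; thus $\mathrm{cone}\{(a_t,b_t):t\in T\}\subseteq H$. Because $H$ is closed, this inclusion persists under closure, giving $\mathrm{cl\,cone}\{(a_t,b_t):t\in T\}\subseteq H$. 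Since $(0_n,1)\notin H$ (as $0^\top\bar{x}=0<1$), the point $(0_n,1)$ does not belong to the closed cone.

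For sufficiency I would argue the contrapositive. Assume $(0_n,1)\notin K$ where $K:=\mathrm{cl\,cone}\{(a_t,b_t):t\in T\}$. Since $K$ is a closed convex cone in $\mathbb{R}^{n+1}$, by the standard strict separation of a point from a closed convex set one obtains $(u,v)\in\mathbb{R}^n\times\mathbb{R}$ and $\gamma\in\mathbb{R}$ with
\[
u^\top y + vs \leq \gamma < u^\top 0_n + v\cdot 1 = v \qquad \text{for all } (y,s)\in K.
\]
Since $K$ is a cone, the linear functional $(y,s)\mapsto u^\top y+vs$ is bounded above on $K$, so it must be nonpositive on $K$ (otherwise a positive scalar multiple would violate the bound), whence I may take $\gamma=0$; in particular $v>0$ and $u^\top a_t+vb_t\leq 0$ for all $t\in T$. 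Setting $\bar{x}:=-u/v$, one obtains $a_t^\top\bar{x}\geq b_t$ for every $t\in T$, i.e., the system is consistent.

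The only mildly delicate step is the application of the separation theorem together with the cone argument that forces the separating value to be zero; beyond that, the proof is essentially a clean packaging of the Farkas-type duality between feasibility and conic closure. No specific structure of the index set $T$ is used other than that $\mathrm{cone}\{(a_t,b_t):t\in T\}$ is well defined as the set of finite nonnegative combinations, so the result holds for arbitrary (possibly infinite) $T$ as stated.
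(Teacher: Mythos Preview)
Your proof is correct. Note, however, that the paper does not supply its own proof of this lemma: it is quoted verbatim as \cite[Theorem 4.4]{GL98} and used as a black box. Your argument is the standard one---separation of a point from a closed convex cone for sufficiency, and containment in the closed half-space $\{(a,b):a^\top\bar x\ge b\}$ for necessity---and it fills in exactly the details one would expect from the cited reference.
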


Using the above Lemma, we first observe that the radius of robust
feasibility $\rho (\mathcal{V})$ is a non-negative number since, given $%
j=1,...,p$, $(0_{n},1)\in (\overline{a}_{j},\overline{b}_{j})+\alpha \mathbb{%
B}_{n+1}$ for a positive large enough $\alpha $, in which case the
corresponding problem $(P_{\alpha })$ is not feasible.

The next result provides a formula for the radius of feasibility which
involves the so-called \emph{hypographical set} (\cite{CLPT05}) of the
system $\{\overline{a}_{j}^{\top }x\geq \overline{b}_{j},j=1,\ldots ,p\}$,
defined as
\begin{equation}
H(\overline{a},\overline{b}):={\rm conv}\left\{ (\overline{a}_{j},\overline{%
b}_{j}),j=1,\ldots ,p\right\} +\mathbb{R}_{+}\left\{ \left( 0_{n},-1\right)
\right\} ,  \label{epi}
\end{equation}%
where $\overline{a}:=(\overline{a}_{1},\ldots ,\overline{a}_{p})\in (\mathbb{%
R}^{n})^{p}$ and $\overline{b}:=(\overline{b}_{1},\ldots ,\overline{b}%
_{p})\in \mathbb{R}^{p}$. We observe that $H(\overline{a},\overline{b})$ is
the sum of the polytope ${\rm conv}\left\{ (\overline{a}_{j},\overline{b}%
_{j}),j=1,\ldots ,p\right\} $ with the closed half-line $\mathbb{R}%
_{+}\left\{ \left( 0_{n},-1\right) \right\} ,$ so that it is a polyhedral
convex set.

\begin{lemma}
\label{lemma:6} Let $(\overline{a}_{j},\overline{b}_{j})\in \mathbb{R}%
^{n}\times \mathbb{R}$, $j=1,\ldots ,p$, and $\alpha \geq 0$. Suppose that
\begin{equation*}
(0_{n},1)\in {\rm cl}{\rm cone}\left( \{(\overline{a}_{j},\overline{b}%
_{j}),j=1,...,p\}+\alpha \mathbb{B}_{n+1}\right) .
\end{equation*}%
Then, for all $\delta >0$, we have
\begin{equation*}
(0_{n},1)\in {\rm cone}\left( \{(\overline{a}_{j},\overline{b}%
_{j}),j=1,...,p\}+(\alpha +\delta )\mathbb{B}_{n+1}\right) .
\end{equation*}
\end{lemma}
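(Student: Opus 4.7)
The plan is to set $S_\alpha := \bigcup_{j=1}^p \bigl((\overline{a}_j, \overline{b}_j) + \alpha \mathbb{B}_{n+1}\bigr)$, which is compact as a finite union of compact balls, and to work inside its compact convex hull $\mathrm{conv}(S_\alpha)$. Since $\mathrm{conv}(S_\alpha)$ is convex, every element of $\mathrm{cone}(S_\alpha) = \mathrm{cone}(\mathrm{conv}(S_\alpha))$ has the form $\Lambda t$ with $\Lambda \geq 0$ and $t \in \mathrm{conv}(S_\alpha)$. The hypothesis therefore supplies sequences $\Lambda_k \geq 0$ and $t_k \in \mathrm{conv}(S_\alpha)$ with $\Lambda_k t_k \to (0_n, 1)$, and I would split the argument according to whether $\{\Lambda_k\}$ is bounded.

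If $\{\Lambda_k\}$ is bounded, compactness lets me extract $\Lambda_k \to \Lambda^*$ and $t_k \to t^* \in \mathrm{conv}(S_\alpha)$ along a subsequence, giving $(0_n, 1) = \Lambda^* t^* \in \mathrm{cone}(S_\alpha) \subseteq \mathrm{cone}(S_{\alpha+\delta})$; the case $\Lambda^* = 0$ is ruled out since it would force $(0_n, 1) = 0$. The main obstacle is the unbounded case. There, $\Lambda_k \to \infty$ along a subsequence forces $t_k = (\Lambda_k t_k)/\Lambda_k \to 0$, so by closedness of $\mathrm{conv}(S_\alpha)$ one obtains $0_{n+1} \in \mathrm{conv}(S_\alpha)$. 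After grouping summands from the same ball (a routine use of ball convexity), this certificate takes the form
\begin{equation*}
\sum_{j=1}^p \mu_j \bigl((\overline{a}_j, \overline{b}_j) + w_j\bigr) = 0_{n+1}, \qquad \mu_j \geq 0,\ \textstyle\sum_j \mu_j = 1,\ w_j \in \alpha \mathbb{B}_{n+1}.
\end{equation*}

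The decisive step is then a vertical perturbation: by the triangle inequality $w_j + (0_n, \delta) \in (\alpha + \delta)\mathbb{B}_{n+1}$, so the shifted points $(\overline{a}_j, \overline{b}_j) + w_j + (0_n, \delta)$ lie in $S_{\alpha+\delta}$, and combining them with the same weights $\mu_j$ yields exactly $(0_n, \delta)$. A final scaling by $1/\delta > 0$ produces $(0_n, 1) \in \mathrm{cone}(S_{\alpha+\delta})$, completing the proof. Conceptually, the heart of the argument is that the $\delta$-slack in the radius is precisely what converts the "zero" certificate arising from divergent multipliers into a genuine certificate for $(0_n, 1)$ inside the \emph{un-closed} cone; the boundedness dichotomy is the hinge that reduces the closure-versus-cone gap to this one scenario.
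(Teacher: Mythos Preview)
Your proof is correct. Both your argument and the paper's share the same skeleton: write the hypothesis as $\Lambda_k t_k \to (0_n,1)$ with $t_k$ in the compact convex hull of the $\alpha$-blown set, split into the bounded and unbounded cases for $\{\Lambda_k\}$, and dispose of the bounded case by compactness. The divergence is in the unbounded case. The paper argues by contradiction: it supposes $(0_n,1)$ is separated from $\mathrm{cone}(S_{\alpha+\delta})$ by some $(\xi,r)$, then perturbs each generator by $-\delta(y,s)$ where $(y,s)$ maximizes $\langle(\xi,r),\cdot\rangle$ over $\mathbb{B}_{n+1}$, and derives a contradiction from $\sum_j\mu_k^j\to\infty$. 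Your argument is direct and avoids the separation theorem entirely: from $\Lambda_k\to\infty$ you extract $0_{n+1}\in\mathrm{conv}(S_\alpha)$, write this as a convex combination $\sum_j\mu_j((\overline a_j,\overline b_j)+w_j)=0_{n+1}$, and then observe that shifting each $w_j$ by $(0_n,\delta)$ stays inside $(\alpha+\delta)\mathbb{B}_{n+1}$ and turns the zero into $(0_n,\delta)$, which rescales to $(0_n,1)$. Your route is slightly more elementary and fully constructive; the paper's route, while less direct, shows more explicitly how the $\delta$-enlargement defeats \emph{any} separating hyperplane, which is perhaps closer in spirit to the uses of the lemma downstream.
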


\begin{proof}
Let $\delta >0$. To see the conclusion, we assume by contradiction that
\begin{equation*}
(0_{n},1)\notin {\rm cone}\left( \{(\overline{a}_{j},\overline{b}%
_{j}),j=1,...,p\}+(\alpha +\delta )\mathbb{B}_{n+1}\right) .
\end{equation*}%
Then, the separation theorem implies that there exists $(\xi ,r)\in \mathbb{R%
}^{n+1}\backslash \{0_{n+1}\}$ such that for all $(y,s)\in {\rm cone}\left(
\{(\overline{a}_{j},\overline{b}_{j}),j=1,...,p\}+(\alpha +\delta )\mathbb{B}%
_{n+1}\right) $ one has
\begin{equation}
r=\langle (\xi ,r),(0_{n},1)\rangle \leq 0\leq \langle (\xi ,r),(y,s)\rangle
,  \label{useful}
\end{equation}%
where $\langle \cdot ,\cdot \rangle $ denotes the usual inner product, i.e. $%
\langle (\xi ,r),(y,s)\rangle =\xi ^{\top }y+rs$. Recall that $(0_{n},1)\in
{\rm cl}{\rm cone}\left( \{(\overline{a}_{j},\overline{b}%
_{j}),j=1,...,p\}+\alpha \mathbb{B}_{n+1}\right) $. So, there exist
sequences $\{(y_{k},s_{k})\}_{k\in \mathbb{N}}\subset \mathbb{R}^{n}\times
\mathbb{R},$ $\{ \mu _{k}^{j}\} _{k\in \mathbb{N}}\subset \mathbb{R}_{+},$
and $\{ (z_{k}^{j},t_{k}^{j})\}_{k\in \mathbb{N}}\subset \mathbb{B}_{n+1}$, $%
j=1,\ldots ,p$, such that $(y_{k},s_{k})\rightarrow (0_{n},1)$ and
\begin{equation*}
(y_{k},s_{k})=\sum_{j=1}^{p}{\mu _{k}^{j}\left( (\overline{a}_{j},\overline{b%
}_{j})+\alpha (z_{k}^{j},t_{k}^{j})\right) }.
\end{equation*}%
If $\{\sum_{j=1}^{p}\mu _{k}^{j}\}_{k\in \mathbb{N}}$ is a bounded sequence,
by passing to subsequence if necessary, we have
\begin{equation*}
(0_{n},1)\in {\rm cone}\left( \{(\overline{a}_{j},\overline{b}%
_{j}),j=1,...,p\}+\alpha \mathbb{B}_{n+1}\right) .
\end{equation*}%
Thus, the claim is true whenever $\{\sum_{j=1}^{p}\mu _{k}^{j}\}_{k\in
\mathbb{N}}$ is a bounded sequence. So, we may assume that $%
\sum_{j=1}^{p}\mu _{k}^{j}\rightarrow +\infty $ as $k\rightarrow \infty $.
Let $(y,s)\in \mathbb{B}_{n+1}$ be such that $\langle (y,s),(\xi ,r)\rangle
=\Vert (\xi ,r)\Vert $. Note that
\begin{equation*}
\sum_{j=1}^{p}{\mu _{k}^{j}\left( (\overline{a}_{j},\overline{b}_{j})+\alpha
(z_{k}^{j},t_{k}^{j})-\delta (y,s)\right) }\in {\rm cone}\left( \{(%
\overline{a}_{j},\overline{b}_{j}),j=1,...,p\}+(\alpha +\delta )\mathbb{B}%
_{n+1}\right) .
\end{equation*}%
Then, \eqref{useful} implies that
\begin{eqnarray*}
r \ \leq \ 0 &\leq &\langle (\xi ,r),\sum_{j=1}^{p}{\mu _{k}^{j}\left( (%
\overline{a}_{j},\overline{b}_{j})+\alpha z_{k}^{j}\right) }\rangle
-(\sum_{j=1}^{p}\mu _{k}^{j})\,\delta \Vert (\xi ,r)\Vert \medskip \\
&=&\langle (\xi ,r),(y_{k},s_{k})\rangle -(\sum_{j=1}^{p}\mu
_{k}^{j})\,\delta \Vert (\xi ,r)\Vert .
\end{eqnarray*}%
Passing to the limit, we arrive to a contradiction as $(\xi ,r)\neq 0_{n+1}$%
, $\delta >0$, $\sum_{j=1}^{p}\mu _{k}^{j}\rightarrow +\infty $ and $%
(y_{k},s_{k})\rightarrow (0_{n},1)$.
\end{proof}

\smallskip

We now provide our promised formula for the radius of robust feasibility.
Observe that, since $0_{n+1}\notin H(\overline{a},\overline{b})$ by Lemma %
\ref{lemma:16},\ $d\left( 0_{n+1},H(\overline{a},\overline{b})\right) $ can
be computed minimizing $\left\Vert \cdot \right\Vert ^{2}$\ on $H(\overline{a%
},\overline{b})$ (i.e. by solving a convex quadratic program).

\begin{theorem}[Radius of robust feasibility]
\label{PropRobustFeasibility} For $(P_\alpha )$, let $(\overline{a}_{j},\overline{b}_{j})\in
\mathbb{R}^{n}\times \mathbb{R}$, $j=1,\ldots ,p,$ with $\{x\in \mathbb{R}%
^{n}:\overline{a}_{j}^{\top }x\geq \overline{b}_{j},j=1,...,p\}\neq
\emptyset $. Let $\mathcal{V}_{j}:=(\overline{a}_{j},\overline{b}%
_{j})+\alpha \mathbb{B}_{n+1},\ j=1,\ldots ,p,$ and $\mathcal{V}%
:=\prod_{j=1}^{p}\mathcal{V}_{j}$. Let $\rho (\mathcal{V})$ be the radius of
robust feasibility as given in \eqref{4.4} and let $H(\overline{a},\overline{%
b})$ be the hypographical set as given in \eqref{epi}. Then, $\rho (\mathcal{%
V})=d\left( 0_{n+1},H(\overline{a},\overline{b})\right) $.
\end{theorem}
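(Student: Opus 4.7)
By Lemma~\ref{lemma:16}, the robust system $\{a_j^\top x\ge b_j:(a_j,b_j)\in\mathcal{V}_j^\alpha,\ j=1,\ldots,p\}$ is feasible if and only if $(0_n,1)\notin \mathrm{cl\,cone}(\{(\overline{a}_j,\overline{b}_j):j=1,\ldots,p\}+\alpha\mathbb{B}_{n+1})$. Writing $d^*:=d(0_{n+1},H(\overline{a},\overline{b}))$, my plan is to establish $\rho(\mathcal{V})\ge d^*$ and $\rho(\mathcal{V})\le d^*$ separately.

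For $\rho(\mathcal{V})\ge d^*$ I would argue by contradiction. Fix $\alpha<d^*$ and suppose $(0_n,1)\in\mathrm{cl\,cone}(\{(\overline{a}_j,\overline{b}_j)\}+\alpha\mathbb{B}_{n+1})$. Pick $\delta>0$ with $\alpha+\delta<d^*$ and invoke Lemma~\ref{lemma:6} to obtain $\mu_j\ge 0$ and $(z_j,t_j)\in\mathbb{B}_{n+1}$ with
\[
(0_n,1)=\sum_{j=1}^{p}\mu_j\bigl((\overline{a}_j,\overline{b}_j)+(\alpha+\delta)(z_j,t_j)\bigr).
\]
Set $\Sigma:=\sum_j\mu_j$, which must be positive, and $\lambda_j:=\mu_j/\Sigma$. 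Writing $q:=\sum_j\lambda_j(\overline{a}_j,\overline{b}_j)\in\mathrm{conv}\{(\overline{a}_j,\overline{b}_j)\}$, dividing through by $\Sigma$ and applying the triangle inequality (together with $\|\sum_j\lambda_j(z_j,t_j)\|\le 1$) yields $\|q-(0_n,1/\Sigma)\|\le\alpha+\delta$. Since $1/\Sigma>0$, the point $q+(1/\Sigma)(0_n,-1)$ lies in $H(\overline{a},\overline{b})$ and its norm equals $\|q-(0_n,1/\Sigma)\|\le\alpha+\delta<d^*$, contradicting $d^*=d(0_{n+1},H(\overline{a},\overline{b}))$.

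For $\rho(\mathcal{V})\le d^*$ I would give an explicit conic representation. Optimising the recession parameter in the definition of $H(\overline{a},\overline{b})$ gives the elementary identity
\[
d^*=\inf\{\|q-(0_n,\tau)\|:q\in\mathrm{conv}\{(\overline{a}_j,\overline{b}_j)\},\ \tau>0\}.
\]
Hence, for every $\alpha>d^*$, I can choose $\tau>0$ and $q=\sum_j\lambda_j(\overline{a}_j,\overline{b}_j)$, with $\lambda_j\ge 0$ and $\sum_j\lambda_j=1$, satisfying $\|q-(0_n,\tau)\|<\alpha$. Setting $\mu_j:=\lambda_j/\tau\ge 0$ and $(z_j,t_j):=((0_n,\tau)-q)/\alpha\in\mathbb{B}_{n+1}$ (the same for every $j$), a one-line computation gives $\sum_j\mu_j\bigl((\overline{a}_j,\overline{b}_j)+\alpha(z_j,t_j)\bigr)=(0_n,1)$. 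Thus $(0_n,1)\in\mathrm{cone}(\{(\overline{a}_j,\overline{b}_j)\}+\alpha\mathbb{B}_{n+1})$, so $(P_\alpha)$ is infeasible by Lemma~\ref{lemma:16} and $\rho(\mathcal{V})\le\alpha$; letting $\alpha\downarrow d^*$ completes the proof.

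The main obstacle is the lower-bound direction: Lemma~\ref{lemma:6} is indispensable to upgrade closure-membership to an exact conic representation, and the subsequent trick of translating $q$ by the positive multiple $(1/\Sigma)(0_n,-1)$ is what converts the ball-perturbation bound into a distance bound to $H(\overline{a},\overline{b})$. The upper-bound direction, once the recession-parameter identity above is noted, reduces to a single algebraic calculation.
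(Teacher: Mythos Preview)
Your proof is correct. Both directions go through cleanly: in the lower-bound step the normalisation $\lambda_j=\mu_j/\Sigma$ together with the triangle inequality indeed produces an element of $H(\overline{a},\overline{b})$ of norm at most $\alpha+\delta$, and in the upper-bound step your explicit choice $\mu_j=\lambda_j/\tau$, $(z_j,t_j)=((0_n,\tau)-q)/\alpha$ does give the conic representation $(0_n,1)=\sum_j\mu_j((\overline{a}_j,\overline{b}_j)+\alpha(z_j,t_j))$. The restriction to $\tau>0$ in the recession identity is harmless, since the infimum over $\tau\ge 0$ agrees with that over $\tau>0$ by continuity.

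Your route is, however, genuinely different from the paper's. The paper introduces the auxiliary distance $\widetilde{d}\big((\overline{a},\overline{b}),\Theta_i\big)$ from the nominal data to the set $\Theta_i$ of \emph{inconsistent} parameters, first proves the identity $\widetilde{d}\big((\overline{a},\overline{b}),\Theta_i\big)=d(0_{n+1},H(\overline{a},\overline{b}))$, and only then links $\rho(\mathcal{V})$ to this distance; Lemma~\ref{lemma:6} is invoked at the last stage. You bypass $\Theta_i$ entirely and work directly with Lemma~\ref{lemma:16} and the geometry of $H(\overline{a},\overline{b})$, which makes the argument shorter and more transparent. What the paper's detour buys is a connection to the distance-to-ill-posedness literature (the quantity $\widetilde{d}\big((\overline{a},\overline{b}),\Theta_i\big)$ is of independent interest), whereas your argument isolates exactly the computation needed for the theorem.
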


\begin{proof}
If a given $(v,w)\in (\mathbb{R}^{n})^{p}\times \mathbb{R}^{p}$ is
interpreted as a perturbation of $(\overline{v},\overline{w})\in (\mathbb{R}%
^{n})^{p}\times \mathbb{R}^{p}$, we can measure the size of this
perturbation as the supremum of the distances between the vectors of
coefficients corresponding to the same index. This can be done by endowing
the parameter space $(\mathbb{R}^{n})^{p}\times \mathbb{R}^{p}$ with the
metric $\widetilde{d}$ defined by
\begin{equation*}
\widetilde{d}\left( (v,w),(p,q)\right) :=\sup\limits_{j=1,...,p}\left\Vert
(v_{j},w_{j})-(p_{j},q_{j})\right\Vert ,\mbox{ for }(v,w),(p,q)\in (\mathbb{R%
}^{n})^{p}\times \mathbb{R}^{p}.
\end{equation*}%
Let $\overline{a}\in (\mathbb{R}^{n})^{p}$ and $\overline{b}\in \mathbb{R}%
^{p}$ be as in \eqref{epi}. Denote the set consisting of all inconsistent
parameters by $\Theta _{i}$, that is,
\begin{equation*}
\Theta _{i}=\{(v,w)\in (\mathbb{R}^{n})^{p}\times \mathbb{R}^{p}:\{x\in
\mathbb{R}^{n}:v_{j}^{\top }x\geq w_{j},j=1,...,p\}=\emptyset \}.
\end{equation*}%
We now show that
\begin{equation}
\widetilde{d}\left( (\overline{a},\overline{b}),\Theta _{i}\right) =d\left(
0_{n+1},H(\overline{a},\overline{b})\right) .  \label{eq:lala}
\end{equation}%
By Lemma \ref{lemma:16}, $d\left( 0_{n+1},H(\overline{a},\overline{b}%
)\right) >0.$\ Let $(a,b)\in H(\overline{a},\overline{b})$ be such that $%
\Vert (a,b)\Vert =d\left( 0_{n+1},H(\overline{a},\overline{b})\right) .$
Then, $0_{n+1}\in H_{1}$ where
\begin{equation*}
H_{1}:=H(\overline{a},\overline{b})-(a,b)={\rm conv}\left\{ (\overline{a}%
_{j}-a,\overline{b}_{j}-b),j=1,\ldots ,p\right\} +\mathbb{R}_{+}\left\{
\left( 0_{n},-1\right) \right\} .
\end{equation*}%
So, there exist $\lambda _{j}\geq 0$ with $\sum_{j=1}^{p}\lambda _{j}=1$ and
$\mu \geq 0$ such that
\begin{equation*}
0_{n+1}=\sum_{j=1}^{p}\lambda _{j}(\overline{a}_{j}-a,\overline{b}%
_{j}-b)+\mu (0_{n},-1).
\end{equation*}%
This shows that
\begin{equation*}
(0_{n},1)=\sum_{j=1}^{p}\tfrac{\lambda _{j}}{\mu +\frac{1}{k}}(\overline{a}%
_{j}-a,\overline{b}_{j}-b+\frac{1}{k}),\text{ }k\in \mathbb{N}.
\end{equation*}%
So, $\{x:\left( \overline{a}_{j}-a\right) ^{\top }x\geq \overline{b}_{j}-b+%
\frac{1}{k},j=1,\ldots ,p\}=\emptyset $. Thus, $(\overline{a}-a,\overline{b}%
-b+\frac{1}{k})\in \Theta _{i}$, and so, $(\overline{a}-a,\overline{b}-b)\in
\mathrm{cl}\,\Theta _{i}$. It follows that
\begin{equation*}
\widetilde{d}\left( (\overline{a},\overline{b}),\Theta _{i}\right) =%
\widetilde{d}\left( (\overline{a},\overline{b}),\mathrm{cl}\,\Theta
_{i}\right) \leq \Vert (a,b)\Vert =d\left( 0_{n+1},H(\overline{a},\overline{b%
})\right) .
\end{equation*}%
To see \eqref{eq:lala}, we suppose on the contrary that $d\left( (\overline{a%
},\overline{b}),\Theta _{i}\right) <d\left( 0_{n+1},H(\overline{a},\overline{%
b})\right) .$ Then, there exist $\varepsilon _{0}>0,$ with $\varepsilon
_{0}<\Vert (a,b)\Vert ,$\ and $(\hat{a},\hat{b})\in \mathrm{bd}\,\Theta _{i}$
such that $\widetilde{d}\left( (\overline{a},\overline{b}),(\hat{a},\hat{b}%
)\right) =\widetilde{d}\left( (\overline{a},\overline{b}),\Theta _{i}\right)
<\Vert (a,b)\Vert -\varepsilon _{0}$. Then, one can find $\{ (\hat{a}^{k},%
\hat{b}^{k})\} _{k\in \mathbb{N}}\subset \Theta _{i}$ such that $(\hat{a}%
^{k},\hat{b}^{k})\rightarrow (\hat{a},\hat{b})$. So, Lemma \ref{lemma:16}
gives us that
\begin{equation*}
(0_{n},1)\in \mathrm{cl}\,\mathrm{cone}\{(\hat{a}_{j}^{k},\hat{b}%
_{j}^{k}):j=1,...,p\}=\mathrm{cone}\{(\hat{a}_{j}^{k},\hat{b}%
_{j}^{k}):j=1,...,p\}.
\end{equation*}%
Thus, there exist $\lambda _{j}^{k}\geq 0$ such that $(0_{n},1)=%
\sum_{j=1}^{p}\lambda _{j}^{k}(\hat{a}_{j}^{k},\hat{b}_{j}^{k})$. Note that $%
\sum_{j=1}^{p}\lambda _{j}^{k}>0$, and so,
\begin{equation*}
0_{n+1}=\sum_{j=1}^{p}\tfrac{\lambda _{j}^{k}}{\sum_{j=1}^{p}\lambda _{j}^{k}%
}(\hat{a}_{j}^{k},\hat{b}_{j}^{k})+\tfrac{1}{\sum_{j=1}^{p}\lambda _{j}^{k}}%
(0_{n},-1).
\end{equation*}%
Then as $k\rightarrow \infty $,
\begin{equation*}
\Vert \sum_{j=1}^{p}\tfrac{\lambda _{j}^{k}}{\sum_{j=1}^{p}\lambda _{j}^{k}}(%
\hat{a}_{j},\hat{b}_{j})+\tfrac{1}{\sum_{j=1}^{p}\lambda _{j}^{k}}%
(0_{n},-1)\Vert =\Vert \sum_{j=1}^{p}\tfrac{\lambda _{j}^{k}}{%
\sum_{j=1}^{p}\lambda _{j}^{k}}(\hat{a}_{j}-\hat{a}_{j}^{k},\hat{b}_{j}-\hat{%
b}_{j}^{k})\Vert \rightarrow 0.
\end{equation*}%
So, $0_{n+1}\in \mathrm{cl}\,H(\hat{a},\hat{b})=H(\hat{a},\hat{b}).$ It then
follows that there exist $\lambda _{j}\geq 0$ with $\sum_{j=1}^{p}\lambda
_{j}=1$ and $\mu \geq 0$ such that
\begin{equation*}
0_{n+1}=\sum_{j=1}^{p}\lambda _{j}(\hat{a}_{j},\hat{b}_{j})+\mu (0_{n},-1).
\end{equation*}%
Thus, we have%
\begin{equation*}
\begin{array}{ll}
& \Vert \sum_{j=1}^{p}\lambda _{j}(\overline{a}_{j},\overline{b}_{j})+\mu
(0,-1)\Vert \medskip \\
= & \Vert \big(\sum_{j=1}^{p}\lambda _{j}(\overline{a}_{j},\overline{b}%
_{j})+\mu (0,-1)\big)-\big(\sum_{j=1}^{p}\lambda _{j}(\hat{a}_{j},\hat{b}%
_{j})+\mu (0_{n},-1)\big)\Vert \medskip \\
= & \Vert \sum_{j=1}^{p}\lambda _{j}\big((\overline{a}_{j},\overline{b}%
_{j})-(\hat{a}_{j},\hat{b}_{j})\big)\Vert \medskip \\
\leq & \widetilde{d}\big((\overline{a},\overline{b}),(\hat{a},\hat{b})\big)%
<\Vert (a,b)\Vert -\varepsilon _{0},%
\end{array}%
\end{equation*}%
where the first inequality follows from the definition of $\widetilde{d}$
and $\lambda _{j}\geq 0$ with $\sum_{j=1}^{p}\lambda _{j}=1$. Note that $%
\sum_{j=1}^{p}\lambda _{j}(\overline{a}_{j},\overline{b}_{j})+\mu
(0_{n},-1)\in H(\overline{a},\overline{b})$. We see that $H(\overline{a},%
\overline{b})\cap (\Vert (a,b)\Vert -\varepsilon _{0})\mathbb{B}_{n+1}\neq
\emptyset $. This shows that $d(0_{n+1},H(\overline{a},\overline{b}))\leq
\Vert (a,b)\Vert -\varepsilon _{0}$ which contradicts the fact that $%
d(0_{n+1},H(\overline{a},\overline{b}))=\Vert (a,b)\Vert .$ Therefore, %
\eqref{eq:lala} holds.

Let $\alpha \in \mathbb{R}_{+}$ so that $(P_{C})$ is feasible for $\alpha $.
Then, $(a,b)\in \Theta _{i}$ implies that $\widetilde{d}\left( (\overline{a},%
\overline{b}),(a,b)\right) >\alpha $. Therefore, \eqref{eq:lala} gives us
that $d\left( 0_{n+1},H(\overline{a},\overline{b})\right) =\widetilde{d}%
\left( (\overline{a},\overline{b}),\Theta _{i}\right) \geq \alpha $. Thus, $%
\rho (\mathcal{V})\leq d\left( 0_{n+1},H(\overline{a},\overline{b})\right) $.

We now show that $\rho (\mathcal{V})=d\left( 0_{n+1},H(\overline{a},%
\overline{b})\right) $. To see this, we proceed by the method of
contradition and suppose that $\rho (\mathcal{V})<d\left( 0_{n+1},H(%
\overline{a},\overline{b})\right) $. The, there exists $\delta >0$ such that
$\rho (\mathcal{V})+2\delta <d\left( 0_{n+1},H(\overline{a},\overline{b}%
)\right) $. Let $\alpha _{0}:=\rho (\mathcal{V})+\delta .$ Then, by the
definition of $\rho (\mathcal{V}),$ $(P_{\alpha _{0}})$ is not feasible,
that is,
\begin{equation*}
\{x\in \mathbb{R}^{n}:a^{\top }x\geq b,\left( a,b\right) \in
\bigcup_{j=1}^{p}\left\{ (\overline{a}_{j},\overline{b}_{j})+\alpha \mathbb{B%
}_{n+1}\right\} \}=\emptyset .
\end{equation*}%
Hence, it follows from Lemma \ref{lemma:16} that%
\begin{equation*}
\left( 0_{n},1\right) \in {\rm cl}{\rm cone}\{\bigcup_{j=1}^{p}\left\{ (%
\overline{a}_{j},\overline{b}_{j})+\alpha \mathbb{B}_{n+1}\right\} \}.
\end{equation*}%
By applying Lemma \ref{lemma:6}, we can find $\mu _{j}\geq 0$ and $\left(
z_{j},t_{j}\right) \in \mathbb{B}_{n+1},$ $j=1,...,p,$ such that
\begin{equation*}
\left( 0_{n},1\right) =\sum_{j=1}^{p}\mu _{j}\left( (\overline{a}_{j},%
\overline{b}_{j})+\left( \alpha _{0}+\delta \right) \left(
z_{j},t_{j}\right) \right) .
\end{equation*}%
Let $\left( a_{j},b_{j}\right) =(\overline{a}_{j},\overline{b}_{j})+\left(
\alpha _{0}+\delta \right) \left( z_{j},t_{j}\right) ,$ $j=1,\ldots ,p,$ $%
a:=(a_{1},\ldots ,a_{p})\in (\mathbb{R}^{n})^{p}$ and $b:=(b_{1},\ldots
,b_{p})\in \mathbb{R}^{p}$. Then, $\widetilde{d}\left( (\overline{a},%
\overline{b}),(a,b)\right) \leq \alpha _{0}+\delta $ and
\begin{equation*}
\left( 0_{n},1\right) =\sum_{j=1}^{p}\mu _{j}\left( a_{j},b_{j}\right) \in
{\rm cone}\left\{ \left( a_{j},b_{j}\right) ,j=1,...,p\right\} .
\end{equation*}%
So, Lemma \ref{lemma:16} implies that $\left\{ x\in \mathbb{R}^{n}:\left(
a_{j},b_{j}\right) ,j=1,...,p\right\} =\emptyset $ and hence $(a,b)\in
\Theta _{i}.$ Thus,
\begin{equation*}
\widetilde{d}\left( (\overline{a},\overline{b}),\Theta _{i}\right) \leq
\widetilde{d}\left( (\overline{a},\overline{b}),(a,b)\right) \leq \alpha
_{0}+\delta =\rho (\mathcal{V})+2\delta .
\end{equation*}

Thus, from \eqref{eq:lala}, we see that $d\left( 0_{n+1},H(\overline{a},%
\overline{b})\right) \leq \widetilde{d}\left( (\overline{a},\overline{b}%
),\Theta _{i}\right) \leq \rho (\mathcal{V})+2\delta .$ This contradicts the
fact that $\rho (\mathcal{V})+2\delta <d\left( 0_{n+1},H(\overline{a},%
\overline{b})\right) .$ So, the conclusion follows.
\end{proof}

\begin{remark} We would like to note that we have given a self-contained and
simple proof for Theorem \ref{PropRobustFeasibility} by exploiting the finitness of
the linear inequality system. A semi-infinite version of
Theorem \ref{PropRobustFeasibility} under a regularity condition was presented
 in \cite[Theorem
3.3]{GJLP}, where the proof relies on several results in \cite{CLPT05} and
\cite{CLPT11}.
\end{remark}

In the following example we show how the radius of robust feasibility of $(P_{\alpha })$ can be calculated
using Theorem \ref{PropRobustFeasibility}.

\begin{example}{\bf (Calculating radius of robust feasibility)}
\label{ExampleRadius} Consider $(P_{\alpha })$ with $n=3,$ $p=5$ and $%
\mathcal{V}_{j}^{\alpha }\ $as in (\eqref{uta}), with
\begin{equation}
\left\{ (\overline{a}_{j},\overline{b}_{j}),j=1,\ldots ,5\right\} =\left\{
\left(
\begin{array}{c}
-2 \\
-1 \\
-2 \\
-6%
\end{array}%
\right) ,\left(
\begin{array}{c}
-1 \\
-2 \\
-2 \\
-6%
\end{array}%
\right) ,\left(
\begin{array}{c}
-1 \\
0 \\
0 \\
-3%
\end{array}%
\right) ,\left(
\begin{array}{c}
0 \\
-1 \\
0 \\
-3%
\end{array}%
\right) ,\left(
\begin{array}{c}
0 \\
0 \\
-1 \\
-3%
\end{array}%
\right) \right\} .  \label{hypo}
\end{equation}%
The minimum of $\left\Vert \cdot \right\Vert ^{2}$ on $H(\overline{a},%
\overline{b}),$ whose linear representation
\begin{equation*}
\left\{
\begin{array}{l}
x_{1}+x_{2}-x_{3}\geq -1 \\
3x_{1}+3x_{2}+3x_{3}-4x_{4}\geq 9 \\
-x_{1}-x_{2}-x_{3}\geq 1 \\
-3x_{1}+x_{2}+x_{3}\geq -1 \\
x_{1}-3x_{2}+x_{3}\geq -1 \\
-x_{1}-x_{2}+3x_{3}\geq -3%
\end{array}%
\right\}
\end{equation*}%
is obtained from \eqref{epi} and \eqref{hypo} by Fourier-Motzkin
elimination, is attained at $\left( -\frac{1}{3},-\frac{1}{3},-\frac{1}{3}%
,-3\right) .$ So,
\begin{equation*}
\rho (\mathcal{V})=\left\Vert \left( -\frac{1}{3},-\frac{1}{3},-\frac{1}{3}%
,-3\right) \right\Vert =\sqrt{\frac{28}{3}}.\newline
\end{equation*}
\end{example}

\section{Rank-1 Objective Matrix Uncertainty}

\label{sec:3}

In this section we assume that the matrix $C$ in the objective function is
uncertain and it belongs to the one-dimensional compact convex uncertainty
set in $\mathbb{R}^{m\times n}$ given by
\begin{equation*}
\mathcal{U}=\{\overline{C}+\rho uv^{\top }:\rho \in \lbrack 0,1]\},
\end{equation*}%
where $\overline{C}$ is a given $m\times n$ matrix while $u\in \mathbb{R}%
_{+}^{m}$ and $v\in \mathbb{R}^{n}$ are given vectors. This data uncertainty
set was introduced and examined in \cite[Section 3]{PLP13}.

Recall that the normal cone of a closed convex set $X$ at $\overline{x}\in X$
is
\begin{equation*}
N(X,\overline{x}):=\{u\in \mathbb{R}^{n}:u^{\top }(x-\overline{x})\leq 0,\
\forall x\in X\}.
\end{equation*}%
Moreover, the simplex $\Delta _{m}$ is defined as $\Delta _{m}:=\{\lambda
\in \mathbb{R}_{+}^{m}:\ \sum_{i=1}^{m}\lambda _{i}=1\}.$ Recall that given $%
x,y\in \mathbb{R}^{m}$, we write $x\leqq y$ ($x<y$) when $x_{i}\leq y_{i}$ ($%
x_{i}<y_{i}$, respectively) for all $i\in I:=\{1,\ldots ,m\}$. Moreover, we
write $x\leq y$ when $x\leqq y$ and $x\neq y$.

Robust efficiency means in \cite{Sitarz08} and \cite[Section 4]{PLP13},
where the constraints are deterministic, the preservation of the
corresponding property for all $C\in \mathcal{U}.$ So, this concept is very
restrictive unless the uncertainty set $\mathcal{U}$ is small in some sense
(e.g. segments emanating from $\overline{C}$). In our general framework of
uncertain objectives and constraints the following definition of robust weak
efficiency is referred to the set $X$ of robust feasible solutions.

\begin{definition}[Robust weakly efficient solution]
We say that $\overline{x}\in \mathbb{R}^{n}$ is a \emph{robust weakly
efficient solution} of $(P_{C})$ if there is no $x\in X$ such that $Cx<C%
\overline{x}$, for all $C\in \mathcal{U}$.
\end{definition}

The next characterization of the robust weakly efficient solutions in terms\
of multipliers involves the so-called\ \emph{characteristic cone} (\cite[p.
81]{GL98}) of the constraint system of $(P_{C}),$ defined as%
\begin{equation*}
C\left( \mathcal{V}\right) :={\rm cone}\left( \bigcup_{j=1}^{p}\mathcal{V}%
_{j}\right) +\mathbb{R}_{+}\left\{ \left( 0_{n},-1\right) \right\} .
\end{equation*}

If $\mathcal{V}_{j}$ is a polytope for all $\,j=1,\ldots ,p,$ then $C\left(
\mathcal{V}\right) $ is generated by the extreme points of the sets $%
\mathcal{V}_{j},\,j=1,\ldots ,p,$ together with the vector $\left(
0_{n},-1\right) .$ So, $C\left( \mathcal{V}\right) $ is a polyhedral convex
cone.

If $\mathcal{V}_{j}$ is a compact convex set for all $\,j=1,\ldots ,p$ and
the strict robust feasibility condition
\begin{equation}
\{x\in \mathbb{R}^{n}:a_{j}^{\top }x>b_{j},\,\forall (a_{j},b_{j})\in
\mathcal{V}_{j},\,j=1,\ldots ,p\}\neq \emptyset  \label{SlaterLinear}
\end{equation}%
holds, then, according to \cite[Theorem 5.3 (ii)]{GL98}, ${\rm cone}\left(
\bigcup_{j=1}^{p}\mathcal{V}_{j}\right) $ is closed, an this in turn implies
that $C\left( \mathcal{V}\right) $ is closed too.

\begin{theorem}[Robust weakly efficient solutions]
\label{th:1} The point $\overline{x}\in X$ is a robust weakly solution of $%
(P_{C})$ if and only if there exist $\lambda ,\tilde{\lambda}\in \Delta _{m}$
such that
\begin{equation*}
\overline{C}^{\top }\lambda \in -N(X,\overline{x})\mbox{ and }(\overline{C}%
+uv^{\top })^{\top }\tilde{\lambda}\in -N(X,\overline{x}).
\end{equation*}%
Moreover, if $\mathcal{V}_{j}$ is convex, $\,j=1,\ldots ,p,$ and $C\left(
\mathcal{V}\right) $ is closed, then the robust weak efficiency of $%
\overline{x}\in X$ is further equivalent to the condition that there exist $%
\lambda ,\tilde{\lambda}\in \Delta _{m}$ and $(a_{j},b_{j}),(\tilde{a}_{j},%
\tilde{b}_{j})\in \mathcal{V}_{j}$, $\mu _{j},\tilde{\mu}_{j}\geq 0$, $%
j=1,\ldots ,p$, such that
\begin{equation*}
\overline{C}^{\top }\lambda =\sum_{j=1}^{p}\mu _{j}a_{j} \quad \mbox{and}
\quad \mu _{j}(a_{j}^{\top }\overline{x}-b_{j})=0, j=1,\ldots ,p,
\end{equation*}
and
\begin{equation*}
(\overline{C}+uv^{\top })^{\top }\tilde{\lambda}=\sum_{j=1}^{p}\tilde{\mu}%
_{j}a_{j} \quad \mbox{and} \quad \tilde{\mu}_{j}(\tilde{a}_{j}^{\top }%
\overline{x}-\tilde{b}_{j})=0, j=1,\ldots ,p.
\end{equation*}
\end{theorem}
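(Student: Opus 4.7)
The plan is to exploit the one-dimensional segment structure of $\mathcal{U}$ to reduce robust weak efficiency, i.e.\ the requirement that $\overline{x}$ be weakly efficient for $\min\{Cx:x\in X\}$ for every $C\in\mathcal{U}$, to weak efficiency at just the two endpoint matrices $\overline{C}$ and $\overline{C}+uv^{\top}$, after which classical scalarization and the KKT representation of $-N(X,\overline{x})$ yield both equivalences.

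The central technical step is to show that $\overline{x}$ is weakly efficient for $C_\rho:=\overline{C}+\rho uv^{\top}$ for every $\rho\in[0,1]$ if and only if it is weakly efficient for both $C_0=\overline{C}$ and $C_1=\overline{C}+uv^{\top}$. The ``only if'' direction is immediate. For the converse, assume there exist $\rho\in(0,1)$ and $x\in X$ with $C_\rho d<0$ componentwise, where $d:=x-\overline{x}$. Writing $C_\rho d=\overline{C}d+\rho u(v^{\top}d)$ and exploiting $u\in\mathbb{R}_+^m$, I would split on the sign of $v^{\top}d$: if $v^{\top}d\geq 0$, then $\overline{C}d\leq C_\rho d<0$, contradicting weak efficiency at $\overline{C}$; if $v^{\top}d<0$, then $(\overline{C}+uv^{\top})d=C_\rho d+(1-\rho)u(v^{\top}d)\leq C_\rho d<0$, contradicting weak efficiency at $\overline{C}+uv^{\top}$. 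Once this reduction is in place, the classical scalarization for linear MOPs over a convex feasible set --- namely, $\overline{x}$ is weakly efficient for $\min\{Mx:x\in X\}$ iff there exists $\mu\in\Delta_m$ with $M^{\top}\mu\in -N(X,\overline{x})$ --- applied separately to $M=\overline{C}$ and $M=\overline{C}+uv^{\top}$ delivers the first equivalence in both directions.

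For the second equivalence, I would invoke the standard Lagrangian representation of the normal cone of a semi-infinite linear feasible set: under convexity of the $\mathcal{V}_j$'s and closedness of the characteristic cone $C(\mathcal{V})$,
\begin{equation*}
-N(X,\overline{x})=\left\{\sum_{j=1}^{p}\mu_j a_j:\mu_j\geq 0,\ (a_j,b_j)\in\mathcal{V}_j,\ \mu_j(a_j^{\top}\overline{x}-b_j)=0,\ j=1,\ldots,p\right\}.
\end{equation*}
Substituting this into the conditions $\overline{C}^{\top}\lambda\in -N(X,\overline{x})$ and $(\overline{C}+uv^{\top})^{\top}\tilde{\lambda}\in -N(X,\overline{x})$ immediately yields the KKT-style multiplier form with complementary slackness for each of $\lambda$ and $\tilde{\lambda}$. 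The main obstacle is the first reduction step: it is specific to the rank-1 uncertainty structure and depends crucially on $u\in\mathbb{R}_+^m$, whereas the scalarization for linear MOPs and the dual representation of $-N(X,\overline{x})$ under closedness of $C(\mathcal{V})$ are classical results.
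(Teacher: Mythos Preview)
Your proposal is correct and follows the same overall architecture as the paper: reduce robust weak efficiency over $\mathcal{U}$ to weak efficiency at the two endpoint matrices, then scalarize via \cite[Prop.~18(iii)]{GGT13}, and finally unfold the normal cone using the Farkas-type representation valid when $C(\mathcal{V})$ is closed. The one substantive difference lies in how the endpoint reduction is carried out. You argue by contrapositive at the level of the weak-efficiency definition itself: if some $C_\rho$ admits a strictly improving direction $d=x-\overline{x}$, you split on the sign of the scalar $v^\top d$ and use $u\in\mathbb{R}_+^m$ to push the strict inequality $C_\rho d<0$ either down to $\overline{C}d<0$ or up to $(\overline{C}+uv^\top)d<0$. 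The paper instead works on the dual side: starting from multipliers $\lambda,\tilde{\lambda}\in\Delta_m$ certifying weak efficiency at the two endpoints, it \emph{constructs} for each $\alpha\in[0,1]$ an explicit convex combination $\gamma=\tau\lambda+(1-\tau)\tilde{\lambda}$, with $\tau=\frac{(1-\alpha)\tilde{\lambda}^\top u}{(1-\alpha)\tilde{\lambda}^\top u+\alpha\lambda^\top u}$, such that $C_\alpha^\top\gamma\in -N(X,\overline{x})$. Your argument is shorter and more transparent about why the rank-one structure with $u\geq 0$ is exactly what is needed; the paper's argument has the mild advantage of being constructive, exhibiting the intermediate multiplier as an explicit function of the endpoint data. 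For the second assertion, the paper does not merely invoke the representation of $-N(X,\overline{x})$ but derives it from the semi-infinite Farkas lemma and an aggregation step over each $\mathcal{V}_j$; you should be prepared to supply that derivation rather than cite it as folklore.
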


\begin{proof}
Let $\overline{x}\in X$ be a robust weakly efficient solution. Then, we have
for each $C\in \mathcal{U}$, there exist no $x\in X$ such that $Cx<C%
\overline{x}.$ By \cite[Prop. 18 (iii)]{GGT13}, this is equivalent to the
fact that
\begin{equation*}
(\forall C\in \mathcal{U}),(\exists \lambda \in \mathbb{R}_{+}^{m}\diagdown
\left\{ 0_{m}\right\} )(C^{\top }\lambda \in -N(X,\overline{x})).
\end{equation*}%
As $N(X,\overline{x})$ is a cone, by normalization, we may assume that $%
\lambda \in \Delta _{m}$, and so, $\overline{x}$ is a robust weakly
efficient solution if and only if
\begin{equation}
(\forall C\in \mathcal{U}),(\exists \lambda \in \Delta _{m})(C^{\top
}\lambda \in -N(X,\overline{x})).  \label{eq:popo}
\end{equation}%
To see the first assertion, it suffices to show that \eqref{eq:popo} is
further equivalent to
\begin{equation}
(\exists \,\lambda ,\tilde{\lambda}\in \Delta _{m})(\overline{C}^{\top
}\lambda \in -N(X,\overline{x})\mbox{ and }(\overline{C}+uv^{\top })^{\top }%
\tilde{\lambda}\in -N(X,\overline{x})).  \label{popo1}
\end{equation}%
To see the equivalence, we only need to show that \eqref{popo1} implies %
\eqref{eq:popo} when $u\neq 0_{m}$ (otherwise $\mathcal{U}$\ is a singleton
set). To achieve this, suppose that \eqref{popo1} holds and fix an arbitrary
$C\in \mathcal{U}$. Then there exists $\alpha \in \lbrack 0,1]$ such that $C=%
\overline{C}+\alpha uv^{\top }.$\newline

Define $\tau :=\frac{\left( 1-\alpha \right) \tilde{\lambda}^{\top }u}{%
\left( 1-\alpha \right) \tilde{\lambda}^{\top }u+\alpha \lambda ^{\top }u}$
and $\gamma :=\tau \lambda +\left( 1-\tau \right) \tilde{\lambda}\geq 0_{m}$%
. As $\lambda ,\tilde{\lambda}\in \Delta _{m}$ and $u\in \mathbb{R}_{+}^{m}$%
, we see that $\tau \in \lbrack 0,1]$ and $\gamma \in \Delta _{m}$.
Moreover, we have
\begin{eqnarray}
&&\tau \alpha (uv^{\top })^{\top }\lambda -(1-\alpha )(1-\tau )(uv^{\top
})^{\top }\tilde{\lambda}\medskip  \notag  \label{eq:pre} \\
&=&\tfrac{\left( 1-\alpha \right) \tilde{\lambda}^{\top }u}{\left( 1-\alpha
\right) \tilde{\lambda}^{\top }u+\alpha \lambda ^{\top }u}\alpha (uv^{\top
})^{\top }\lambda -\tfrac{\alpha \lambda ^{\top }u}{\left( 1-\alpha \right)
\tilde{\lambda}^{\top }u+\alpha \lambda ^{\top }u}(1-\alpha )(uv^{\top
})^{\top }\tilde{\lambda}\medskip  \notag \\
&=&\tfrac{\left( 1-\alpha \right) \tilde{\lambda}^{\top }u}{\left( 1-\alpha
\right) \tilde{\lambda}^{\top }u+\alpha \lambda ^{\top }u}\alpha (u^{\top
}\lambda )v-\tfrac{\alpha \lambda ^{\top }u}{\left( 1-\alpha \right) \tilde{%
\lambda}^{\top }u+\alpha \lambda ^{\top }u}(1-\alpha )(u^{\top }\tilde{%
\lambda})v=0_{m}.
\end{eqnarray}%
Now,

\begin{eqnarray*}
C^{\top }\gamma &=&\left( \overline{C}+\alpha uv^{\top }\right) ^{\top
}(\tau \lambda +\left( 1-\tau \right) \tilde{\lambda})\medskip \\
&=&\tau \overline{C}^{\top }\lambda +\tau \alpha (uv^{\top })^{\top }\lambda
+(1-\tau )\left( \overline{C}+\alpha uv^{\top }\right) ^{\top }\tilde{\lambda%
}\medskip \\
&=&\tau \overline{C}^{\top }\lambda +\tau \alpha (uv^{\top })^{\top }\lambda
+(1-\tau )(\overline{C}+uv^{\top })^{\top }\tilde{\lambda}-(1-\alpha
)(1-\tau )(uv^{\top })^{\top }\tilde{\lambda}\medskip \\
&=&\tau \overline{C}^{\top }\lambda +(1-\tau )(\overline{C}+uv^{\top
})^{\top }\tilde{\lambda}\in N(X,\overline{x}).
\end{eqnarray*}%
where the fourth equality follows from \eqref{eq:pre} and the last relation
follows from \eqref{popo1} and the convexity of $N(X,\overline{x})$.

To see the second assertion, we assume that $\mathcal{V}_{j}$ is convex, $%
\,j=1,\ldots ,p,$ and $C\left( \mathcal{V}\right) $ is closed. We only need
to show
\begin{equation*}
N(X,\overline{x})=\left\{ -\sum_{j=1}^{p}\mu _{j}a_{j}:(a_{j},b_{j})\in
\mathcal{V}_{j},\mu _{j}\geq 0\mbox{ and }\mu _{j}(a_{j}^{\top }\overline{x}%
-b_{j})=0,j=1,\ldots ,p\right\} .
\end{equation*}

The system $\left\{ a^{\top }x\geq b,\left( a,b\right) \in T\right\} ,$ with
$T=\left( \bigcup_{j=1}^{p}\mathcal{V}_{j}\right) ,$ is a linear
representation of $X.$ Thus, $u\in N(X,\overline{x})$ if and only if the
inequality $-u^{\top }x\geq $ $-u^{\top }\overline{x}$ is consequence of $%
\left\{ a^{\top }x\geq b,\left( a,b\right) \in T\right\} $ if and only if
(by the Farkas Lemma, \cite[Corollary 3.1.2]{GL98})
\begin{equation*}
-\left( u,u^{\top }\overline{x}\right) \in {\rm cone}T+\mathbb{R}%
_{+}\left\{ \left( 0_{n},-1\right) \right\} .
\end{equation*}%
This is equivalent to assert the existence of a finite subset $S$ of $T,$
corresponding non-negative scalars $\lambda _{s},$ $s\in S,$ and $\mu \geq
0, $ such that
\begin{equation}
-\left( u,u^{\top }\overline{x}\right) =\sum\nolimits_{\left( a,b\right) \in
S}\lambda _{\left( a,b\right) }\left( a,b\right) +\mu \left( 0_{n},-1\right)
.  \label{mu}
\end{equation}%
Multiplying by $\left( \overline{x},-1\right) $ both members of \eqref{mu}
we get $\mu =0,$ so that \eqref{mu} is equivalent to
\begin{equation}
-u=\sum\nolimits_{\left( a,b\right) \in S}\lambda _{\left( a,b\right) }a%
\text{ and }\lambda _{\left( a,b\right) }(a^{\top }\overline{x}-b)=0,\left(
a,b\right) \in S.  \label{mu1}
\end{equation}

Finally, since $S\subset \bigcup_{j=1}^{p}\mathcal{V}_{j},$ we can write $%
S=\bigcup_{j=1}^{p} S_{j},$ with $S_{j}\subset \mathcal{V}_{j},$ $j=1,\ldots
,p,$ and $S_{i}\cap S_{j}=\emptyset $ when $i\neq j.$ Let $\mu
_{j}:=\sum\nolimits_{\left( a,b\right) \in S_{j}}\lambda _{\left( a,b\right)
},$ $j=1,\ldots ,p.$ If $\mu _{j}\neq 0$ one has, by convexity of $\mathcal{V%
}_{j},$
\begin{equation*}
(a_{j},b_{j}):=\tfrac{\sum\nolimits_{\left( a,b\right) \in S_{j}}\lambda
_{\left( a,b\right) }\left( a,b\right) }{\mu _{j}}\in \mathcal{V}_{j}.
\end{equation*}%
Take $(a_{j},b_{j})\in \mathcal{V}_{j}$ arbitrarily when $\mu _{j}=0.$\ Then
we get from \eqref{mu1} that%
\begin{equation*}
-u=\sum_{j=1}^{p}\mu _{j}a_{j}\text{ and }\mu _{j}(a_{j}^{\top }\overline{x}%
-b_{j})=0,j=1,\ldots ,p.
\end{equation*}

Thus, the conclusion follows.
\end{proof}

\smallskip

In the definition of the rank-1 objective data uncertainty set, $\mathcal{U}%
=\{\overline{C}+\rho uv^{\top }:\rho \in \lbrack 0,1]\}$, we require that $%
u\in \mathbb{R}_{+}^{m}$. The following example (inspired in \cite[Example
3.3]{PLP13}) illustrates that if this non-negativity requirement is dropped,
then the above solution characterization in Theorem \ref{th:1} may fail.

\begin{example}[Non-negativity requirement for rank-1 objective data
uncertainty]
Let
\begin{equation*}
\overline{C}=\left(
\begin{array}{ccc}
-3 & -1 & -2 \\
0 & -1 & -2%
\end{array}%
\right) ,\text{ }u=\left(
\begin{array}{c}
-1 \\
1%
\end{array}%
\right) \notin \mathbb{R}_{+}^{2}\text{ }\mbox{ and }\ v=\left(
\begin{array}{c}
0 \\
-3 \\
0%
\end{array}%
\right) .
\end{equation*}%
Consider the uncertain multiobjective optimization problem
\begin{equation}
\quad \text{V-}\min \left\{ Cx:a_{j}^{\top }x\geq b_{j},\;\forall
(a_{j},b_{j})\in \mathcal{V}_{j},\,j=1,\ldots ,4\right\} ,  \label{problem}
\end{equation}%
where the objective data matrix $C$ is an element of
\begin{equation*}
\{\overline{C}+\rho uv^{\top }:\rho \in \lbrack 0,1]\}=\left\{ \left(
\begin{array}{ccc}
-3 & -1 & -2 \\
0 & -1 & -2%
\end{array}%
\right) +\rho \left(
\begin{array}{ccc}
0 & 3 & 0 \\
0 & -3 & 0%
\end{array}%
\right) :\rho \in \lbrack 0,1]\right\}
\end{equation*}%
and the uncertainty sets for the constraints are the convex polytopes
\begin{equation*}
\mathcal{V}_{1}={\rm conv}\left\{ \left(
\begin{array}{c}
-2 \\
-1 \\
-2 \\
-6%
\end{array}%
\right) ,\left(
\begin{array}{c}
-1 \\
-2 \\
-2 \\
-6%
\end{array}%
\right) \right\} \text{ and }\mathcal{V}_{2}={\rm conv}\left\{ \left(
\begin{array}{c}
-1 \\
0 \\
0 \\
-3%
\end{array}%
\right) ,\left(
\begin{array}{c}
0 \\
-1 \\
0 \\
-3%
\end{array}%
\right) ,\left(
\begin{array}{c}
0 \\
0 \\
-1 \\
-3%
\end{array}%
\right) \right\} .
\end{equation*}%
Note that the robust feasible set is%
\begin{equation*}
X=\{x\in \mathbb{R}^{n}:a_{j}^{\top }x\geq b_{j},\,\forall (a_{j},b_{j})\in
\mathcal{V}_{j},j=1,2\}=\left\{ \overline{a}_{j}^{\top }x\geq \overline{b}%
_{j},j=1,...,5\right\} ,
\end{equation*}%
where $\left\{ \overline{a}_{j}^{\top }x\geq \overline{b}_{j},j=1,...,5%
\right\} $ is the set in \eqref{hypo}. It can be checked that $\overline{x}%
=(1,1,3/2)\in X$ and so,
\begin{equation*}
N(X,\overline{x})=\left\{ \mu _{1}\left(
\begin{array}{c}
2 \\
1 \\
2%
\end{array}%
\right) +\mu _{2}\left(
\begin{array}{c}
1 \\
2 \\
2%
\end{array}%
\right) :\mu _{1}\geq 0,\mu _{2}\geq 0\right\} .
\end{equation*}%
Let $\lambda =(2/3,1/3)^{\top }$ and $\tilde{\lambda}=(1/3,2/3)^{\top }$.
Then, we have
\begin{equation*}
\overline{C}^{\top }\lambda \in -N(X,\overline{x})\mbox{ and }(\overline{C}%
+uv^{\top })^{\top }\tilde{\lambda}\in -N(X,\overline{x}).
\end{equation*}%
On the other hand, for
\begin{equation*}
C=\left(
\begin{array}{ccc}
-3 & -1 & -2 \\
0 & -1 & -2%
\end{array}%
\right) +\frac{1}{2}\left(
\begin{array}{ccc}
0 & 3 & 0 \\
0 & -3 & 0%
\end{array}%
\right) =\left(
\begin{array}{ccc}
-3 & \frac{1}{2} & -2 \\
0 & -\frac{5}{2} & -2%
\end{array}%
\right) \in \mathcal{U},
\end{equation*}%
and $x=(0,0,3)^{\top }\in X$, we see that
\begin{equation*}
Cx=\left(
\begin{array}{c}
-6 \\
-6%
\end{array}%
\right) <\left(
\begin{array}{c}
-\frac{11}{2}\medskip \\
-\frac{11}{2}%
\end{array}%
\right) =C\overline{x}.
\end{equation*}%
So, $\overline{x}$ is not a weakly efficient solution of \eqref{problem}.
Thus, the above solution characterization fails.
\end{example}

In the case where the constraints are uncertainty free, i.e. the sets $%
\mathcal{V}_{j}$ are all singletons, we obtain the following solution
characterization for robust multiobjective optimization problem with
rank-one objective uncertainty.

\begin{corollary}
\label{CorolRank1}Let $\mathcal{V}_{j}=\left\{ (\overline{a}_{j},\overline{b}%
_{j})\right\} ,$ $j=1,\ldots ,p,$ and $\overline{x}\in X.$ Then, the
following statements are equivalent:\newline
(i) $\overline{x}$ is a robust weakly efficient solution;\newline
(ii) there exist $\lambda ,\tilde{\lambda}\in \Delta _{m}$ such that
\begin{equation*}
\overline{C}^{\top }\lambda \in -N(X,\overline{x})\mbox{ and }(\overline{C}%
+uv^{\top })^{\top }\tilde{\lambda}\in -N(X,\overline{x});
\end{equation*}%
\newline
(iii) there exist $\lambda ,\tilde{\lambda}\in \Delta _{m}$ and $\mu _{j},%
\tilde{\mu}_{j}\geq 0$, $j=1,\ldots ,p$, such that%
\begin{equation*}
\overline{C}^{\top }\lambda =\sum_{j=1}^{p}\mu _{j}\overline{a}_{j}%
\mbox{
and }\mu _{j}(\overline{a}_{j}^{\top }\overline{x}-\overline{b}%
_{j})=0,j=1,\ldots ,p,
\end{equation*}%
and%
\begin{equation*}
(\overline{C}+uv^{\top })^{\top }\tilde{\lambda}=\sum_{j=1}^{p}\tilde{\mu}%
_{j}\overline{a}_{j}\mbox{ and }\tilde{\mu}_{j}(\overline{a}_{j}^{\top }%
\overline{x}-\overline{b}_{j})=0,j=1,\ldots ,p;
\end{equation*}%
\newline
(iv) $\overline{x}$ is a weakly efficient solution for the problems
\begin{equation*}
\begin{array}{lrl}
(P_{0}) & \text{V-}\min & \overline{C}x \\
& \text{s.t.} & \overline{a}_{j}^{\top }x\geq \overline{b}_{j},\,j=1,\ldots
,p,%
\end{array}%
\end{equation*}%
and
\begin{equation*}
\begin{array}{lrl}
(P_{1}) & \text{V-}\min & (\overline{C}+uv^{\top })x \\
& \text{s.t.} & \overline{a}_{j}^{\top }x\geq \overline{b}_{j},\,j=1,\ldots
,p.%
\end{array}%
\end{equation*}
\end{corollary}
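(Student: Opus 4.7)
The plan is to obtain the corollary as a direct specialization of Theorem~\ref{th:1}, together with the standard scalarization characterization of weak efficiency for a deterministic multi-objective linear program. First I would verify that the hypotheses of Theorem~\ref{th:1} are trivially satisfied in the present setting: each singleton $\mathcal{V}_j=\{(\overline{a}_j,\overline{b}_j)\}$ is convex, and the characteristic cone $C(\mathcal{V})=\mathrm{cone}\{(\overline{a}_j,\overline{b}_j):j=1,\dots,p\}+\mathbb{R}_+\{(0_n,-1)\}$ is finitely generated, hence polyhedral and closed.

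With these observations in hand, Theorem~\ref{th:1} immediately yields the equivalences (i)~$\Leftrightarrow$~(ii)~$\Leftrightarrow$~(iii). Indeed, (i)~$\Leftrightarrow$~(ii) is the first assertion of that theorem applied to $\overline{x}\in X$, while (ii)~$\Leftrightarrow$~(iii) follows from the second assertion once we specialize $(a_j,b_j)=(\tilde a_j,\tilde b_j)=(\overline{a}_j,\overline{b}_j)$, the only choice available in a singleton $\mathcal{V}_j$. The complementarity conditions $\mu_j(\overline{a}_j^\top\overline{x}-\overline{b}_j)=0$ and $\tilde\mu_j(\overline{a}_j^\top\overline{x}-\overline{b}_j)=0$ then transcribe directly.

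The remaining task is to connect (ii) with (iv). For each of the deterministic problems $(P_0)$ and $(P_1)$, the feasible set is exactly $X$ (since the constraints have no uncertainty), so I would apply the well-known scalarization characterization of weak efficiency for a multi-objective linear program---the same result \cite[Prop.~18(iii)]{GGT13} invoked inside the proof of Theorem~\ref{th:1}---to each problem separately. This states that $\overline{x}$ is weakly efficient for $(P_0)$ if and only if there exists $\lambda\in\Delta_m$ with $\overline{C}^\top\lambda\in -N(X,\overline{x})$, and analogously $\overline{x}$ is weakly efficient for $(P_1)$ if and only if there exists $\tilde\lambda\in\Delta_m$ with $(\overline{C}+uv^\top)^\top\tilde\lambda\in -N(X,\overline{x})$. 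Combining these two equivalences gives (iv)~$\Leftrightarrow$~(ii), closing the chain.

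There is no real obstacle here: once one has identified that the corollary is just the singleton-constraint reduction of Theorem~\ref{th:1}, the only additional ingredient is the classical scalarization of weak efficiency for a single deterministic multi-objective LP, applied once to $(P_0)$ and once to $(P_1)$. The mild point worth stating carefully is why no Slater-type or closedness hypothesis needs to be imposed by hand---this is because the singleton structure makes $C(\mathcal{V})$ polyhedral automatically, so the second part of Theorem~\ref{th:1} applies without further qualification.
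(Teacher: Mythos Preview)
Your proposal is correct and follows essentially the same approach as the paper: both derive (i)$\Leftrightarrow$(ii)$\Leftrightarrow$(iii) from Theorem~\ref{th:1} (using that singleton $\mathcal{V}_j$ are polytopes, so $C(\mathcal{V})$ is closed), and both invoke \cite[Prop.~18(iii)]{GGT13} to link with (iv). The only cosmetic difference is that the paper closes the chain via (i)$\Rightarrow$(iv) (immediate from the definition, since $\overline{C},\,\overline{C}+uv^\top\in\mathcal{U}$) and (iv)$\Rightarrow$(ii), whereas you establish (ii)$\Leftrightarrow$(iv) directly by applying the scalarization characterization in both directions to $(P_0)$ and $(P_1)$.
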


\begin{proof}
Let $\mathcal{V}_{j}=\{\left( \overline{a}_{j},\overline{b}_{j}\right) \}$, $%
j=1,\ldots ,p$. The equivalences (i)$\Leftrightarrow $(ii)$\Leftrightarrow $%
(iii) come from Theorem \ref{th:1}, taking into account that all the
uncertainty sets $\mathcal{V}_{j}$ are polytopes. Note that (i)$\Rightarrow $%
(iv) always holds. Finally, the implication (iv)$\Rightarrow $(ii) is
immediate by the usual characterization for weakly efficient solutions (e.g.
see \cite[Prop. 18(iii)]{GGT13}). Thus, the conclusion follows.
\end{proof}

\begin{remark}
The equivalence (i)$\Leftrightarrow $(iii)\ in Corollary \ref{CorolRank1},
on robust weakly efficient solutions of uncertain vector linear programming
problems, can be seen as a counterpart of \cite[Theorem 3.1]{PLP13}, on
robust efficient solutions of the same type of problems.
\end{remark}

\section{Tractable Classes of Robust Multi-Objective LPs}

\label{sec:4}

In this Section, we provide various classes of commonly used uncertainty
sets determining the robust feasible set
\begin{equation*}
X=\{x\in \mathbb{R}^{n}:a_{j}^{\top }x\geq b_{j},\,\forall (a_{j},b_{j})\in
\mathcal{V}_{j},\,j=1,\ldots ,p\},
\end{equation*}%
under which one can numerically check whether a robust feasible point is a
robust weakly efficient solution or not. Throughout this Section we assume
that the objective function of $(P_{C})$ satisfies the rank-1 matrix data
uncertainty, as defined in Section \ref{sec:3}. We begin with the simple box
constraint data uncertainty.

\subsection{Box constraint data Uncertainty}

Consider
\begin{equation}
\mathcal{V}_{j}=[\underline{a}_{j},\overline{a}_{j}]\times \lbrack
\underline{b}_{j},\overline{b}_{j}],  \label{VBox}
\end{equation}%
where $\underline{a}_{j},\overline{a}_{j}\in \mathbb{R}^{n}$ and $\underline{%
b}_{j},\overline{b}_{j}\in \mathbb{R},$ $j=1,\ldots ,p$. Denote the extreme
points of $[\underline{a}_{j},\overline{a}_{j}]$ by $\{\hat{a}%
_{j}^{(1)},\ldots ,\hat{a}_{j}^{(2^{n})}\}$.

\begin{theorem}
\label{th:3} Let $\mathcal{V}_{j}$ be as in \eqref{VBox}, $\,j=1,\ldots ,p$.
The point $\overline{x}\in X$ is a robust weakly efficient solution of $%
(P_{C})$ if and only if there exist $\lambda ,\tilde{\lambda}\in \Delta _{m}$
and $\mu _{j}^{(l)},\tilde{\mu}_{j}^{(l)}\geq 0$ such that
\begin{equation*}
\overline{C}^{\top }\lambda =\sum_{j=1}^{p}\sum_{l=1}^{2^{n}}\mu _{j}^{(l)}%
\hat{a}_{j}^{(l)}\quad \mbox{and}\quad \mu _{j}^{(l)}\bigg((\hat{a}%
_{j}^{(l)})^{\top }\overline{x}-\overline{b}_{j}\bigg)=0,\,j=1,\ldots
,p,l=1,\ldots ,2^{n},
\end{equation*}%
and
\begin{equation*}
(\overline{C}+uv^{\top })^{\top }\tilde{\lambda}=\sum_{j=1}^{p}%
\sum_{l=1}^{2^{n}}\tilde{\mu}_{j}^{(l)}\hat{a}_{j}^{(l)}\quad \mbox{and}%
\quad \tilde{\mu}_{j}^{(l)}\bigg((\hat{a}_{j}^{(l)})^{\top }\overline{x}-%
\overline{b}_{j}\bigg)=0,\,j=1,\ldots ,p,l=1,\ldots ,2^{n}.
\end{equation*}
\end{theorem}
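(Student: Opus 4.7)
The plan is to reduce the semi-infinite constraint system defining $X$ to an equivalent finite linear system indexed by the vertices of the $a$-boxes, and then invoke Corollary \ref{CorolRank1} applied to this deterministic reformulation.

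The first step is to establish the finite linear description
\begin{equation*}
X=\{x\in\mathbb{R}^n:(\hat{a}_j^{(l)})^\top x\geq \overline{b}_j,\ j=1,\ldots,p,\ l=1,\ldots,2^n\}.
\end{equation*}
This follows because the product structure of $\mathcal{V}_j=[\underline{a}_j,\overline{a}_j]\times[\underline{b}_j,\overline{b}_j]$ decouples the minimization over $a_j$ from the maximization over $b_j$: the semi-infinite constraint $a_j^\top x\geq b_j$ for all $(a_j,b_j)\in\mathcal{V}_j$ is equivalent to $\min_{a_j\in[\underline{a}_j,\overline{a}_j]} a_j^\top x\geq \max_{b_j\in[\underline{b}_j,\overline{b}_j]}b_j=\overline{b}_j$, and the minimum of a linear function on the box $[\underline{a}_j,\overline{a}_j]$ is attained at one of its $2^n$ vertices, so the constraint reduces to $(\hat{a}_j^{(l)})^\top x\geq\overline{b}_j$ for every $l$. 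The converse inclusion is immediate by taking $a_j=\hat{a}_j^{(l)}$ and $b_j=\overline{b}_j$.

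Given this reformulation, the uncertain problem $(P_C)$ has exactly the same robust feasible set and the same rank-one objective uncertainty $\mathcal{U}=\{\overline{C}+\rho uv^\top:\rho\in[0,1]\}$ as the deterministic multi-objective linear program with the $p\cdot 2^n$ singleton uncertainty sets $\mathcal{V}_{j,l}=\{(\hat{a}_j^{(l)},\overline{b}_j)\}$. In particular the notion of robust weak efficiency of a point $\overline{x}\in X$ is preserved verbatim. Applying the equivalence (i)$\Leftrightarrow$(iii) of Corollary \ref{CorolRank1} to this reformulated problem and relabelling the resulting multipliers by $\mu_j^{(l)}$ and $\tilde\mu_j^{(l)}$ yields precisely the KKT-type characterization in the statement. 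The only nontrivial conceptual step is the vertex-reduction in the first paragraph; once that is in place, the multiplier manipulation is a direct application of the already established corollary, and I expect no further obstacle.
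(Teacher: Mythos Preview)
Your proposal is correct and follows essentially the same approach as the paper: both arguments first reduce the semi-infinite box constraints to the finite system $(\hat a_j^{(l)})^\top x\ge\overline b_j$ by the vertex argument, and then invoke the already established multiplier characterization. The only cosmetic difference is that the paper computes $N(X,\overline x)$ directly from this finite polyhedral description and plugs it into the first assertion of Theorem~\ref{th:1}, whereas you recast the finite system as a problem with singleton uncertainty sets $\mathcal V_{j,l}=\{(\hat a_j^{(l)},\overline b_j)\}$ and cite Corollary~\ref{CorolRank1}(i)$\Leftrightarrow$(iii); these are equivalent packagings of the same step.
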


\begin{proof}
Let $\overline{x}$ be a robust weakly efficient solution of $(P_{C})$. Note
that $X$ can be rewritten as
\begin{eqnarray*}
X &=&\left\{ x\in \mathbb{R}^{n}:a_{j}^{\top }x-b_{j}\geq 0\mbox{ for all }%
(a_{j},b_{j})\in \lbrack \underline{a}_{j},\overline{a}_{j}]\times \lbrack
\underline{b}_{j},\overline{b}_{j}]\right\} \\
&=&\left\{ x\in \mathbb{R}^{n}:(a_{j}^{(l)})^{\top }x-\overline{b}_{j}\geq
0,l=1,\ldots ,2^{n},j=1,\ldots ,p\right\} .
\end{eqnarray*}%
Then, we have
\begin{equation*}
N(X,\overline{x})=\left\{ -\sum_{j=1}^{p}\sum_{l=1}^{2^{n}}\mu _{j}^{(l)}%
\hat{a}_{j}^{(l)}:\mu _{j}^{(l)}\bigg((\hat{a}_{j}^{(l)})^{\top }\overline{x}%
-\overline{b}_{j}\bigg)=0,\mu _{j}^{(l)}\geq 0,\forall l,\forall j\right\} .
\end{equation*}%
Since $\mathcal{V}_{j}$ is a convex polytope for $\,j=1,\ldots ,p,$ the
conclusion follows from Theorem \ref{th:1}.
\end{proof}

\smallskip

It is worth noting, from Theorem \ref{th:3}, that one can determine whether
or not a given robust feasible point $\overline{x}$ of $(P_{C})$ under the
box constraint data uncertainty is a robust weakly efficient solution by
solving finitely many linear equalities.

\subsection{Norm constraint data uncertainty}

Consider the constraint data uncertainty set
\begin{equation}
\mathcal{V}_{j}=\left\{ \overline{a}_{j}+\delta _{j}\overline{v}_{j}:%
\overline{v}_{j}\in \mathbb{R}^{n},\Vert Z_{j}\overline{v}_{j}\Vert _{s}
\leq 1\right\} \times \lbrack \underline{b}_{j},\overline{b}_{j}],
\label{VNorm}
\end{equation}
where $\overline{a}_{j}\in \mathbb{R}^{n}$, $\overline{b}_{j}\in \mathbb{R}$%
, $Z_{j}$ is an invertible symmetric $(n\times n)$ matrix, $j=1,\ldots ,p,$
and let $\Vert \cdot \Vert _{s}$ denote the $s$-norm, $s\in \lbrack
1,+\infty ]$, defined by
\begin{equation*}
\Vert x\Vert _{s}=\left\{
\begin{array}{lll}
\sqrt[s]{\sum_{i=1}^{n}|x_{i}|^{s}} & \mbox{ if } & s\in \lbrack 1,+\infty
),\medskip \\
\max \{|x_{i}|:1\leq i\leq n\} & \mbox{ if } & s=+\infty .%
\end{array}%
\right.
\end{equation*}%
Moreover, we define $s^{\ast }\in \lbrack 1,+\infty ]$ to be the number so
that $\frac{1}{s}+\frac{1}{s^{\ast }}=1$. The following simple facts about $%
s $-norms will be used later on. First, the dual norm of the $s$-norm is the
$s^{\ast }$-norm, that is,
\begin{equation*}
\sup_{\Vert x\Vert _{s}\leq 1}u^{\top }x=\Vert u\Vert _{s^{\ast }}%
\mbox{ for
all }u\in \mathbb{R}^{n}.
\end{equation*}%
Second, $\partial (\Vert \cdot \Vert _{s^{\ast }})(u)=\{v:\Vert v\Vert
_{s}\leq 1,v^{\top }u=\Vert u\Vert _{s^{\ast }}\}$ where $\partial f(x)$
denotes the usual convex subdifferential of a convex function $f:\mathbb{R}%
^{n}\rightarrow \mathbb{R}$ at $x\in \mathbb{R}^{n},$ i.e.
\begin{equation*}
\partial f(x)=%
\begin{array}{cc}
\{z\in \mathbb{R}^{n}:z^{\top }(y-x)\leq f(y)-f(x)\,\forall \,y\in \mathbb{R}%
^{n}\}. &
\end{array}%
\end{equation*}%
In this case, we have the following characterization of robust weakly
efficient solutions.

\begin{theorem}
\label{NA} Let $\mathcal{V}_{j}$ be as in \eqref{VNorm}, $\,j=1,\ldots ,p,$
and suppose that there exists $x_{0}\in \mathbb{R}^{n}$ such that
\begin{equation}
\overline{a}_{j}^{\top }x_{0}-\overline{b}_{j}-\delta \Vert
Z_{j}^{-1}x_{0}\Vert _{s^{\ast }}>0,j=1,\ldots ,p.  \label{SlaterNorm}
\end{equation}%
Then, a point $\overline{x}\in X$ is a robust weakly efficient solution of $%
(P_{C})$ if and only if there exist $\lambda ,\tilde{\lambda}\in \Delta _{m}$%
, $\mu ,\tilde{\mu}\in \mathbb{R}_{+}^{p}$ and $w_{j},\tilde{w}_{j}\in
\mathbb{R}^{n},$ with $\Vert w_{j}\Vert _{s}\leq 1$ and $\Vert \tilde{w}%
_{j}\Vert _{s}\leq 1,$ such that
\begin{equation*}
-\lambda ^{\top }\overline{C}\overline{x}=\sum_{j=1}^{p}\mu _{j}\overline{b}%
_{j}\quad \mbox{and}\quad \overline{C}^{\top }\lambda +\sum_{j=1}^{p}\mu
_{j}(\overline{a}_{j}-\delta Z_{j}^{-1}w_{j})=0_{n}.
\end{equation*}%
and
\begin{equation*}
-\tilde{\lambda}^{\top }(\overline{C}+uv^{\top })\overline{x}=\sum_{j=1}^{p}%
\tilde{\mu}_{j}\overline{b}_{j}\quad \mbox{and}\quad (\overline{C}+uv^{\top
})^{\top }\tilde{\lambda}+\sum_{j=1}^{p}\tilde{\mu}_{j}(\overline{a}%
_{j}-\delta Z_{j}^{-1}\tilde{w}_{j})=0_{n}.
\end{equation*}
\end{theorem}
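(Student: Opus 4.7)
The plan is to reduce the characterization to the second assertion of Theorem~\ref{th:1} by showing that, under \eqref{SlaterNorm}, the hypotheses of that result hold. First I would verify that each $\mathcal{V}_j$ is compact and convex: under the change of variable $w_j:=Z_j\overline{v}_j$, admissible because $Z_j$ is invertible, the first factor becomes $\{\overline{a}_j+\delta Z_j^{-1}w_j:\Vert w_j\Vert_s\le 1\}$, and the second factor is a closed interval. Next I would identify \eqref{SlaterNorm} with the strict robust feasibility condition \eqref{SlaterLinear}. Using $(Z_j^{-1})^{\top}=Z_j^{-1}$ together with the dual-norm identity recalled just before the statement,
\[
\inf_{(a_j,b_j)\in\mathcal{V}_j}(a_j^{\top}x_0-b_j)=\overline{a}_j^{\top}x_0-\delta\Vert Z_j^{-1}x_0\Vert_{s^{\ast}}-\overline{b}_j,
\]
so \eqref{SlaterNorm} is precisely strict robust feasibility at $x_0$. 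By the discussion preceding Theorem~\ref{th:1}, this guarantees that the characteristic cone $C(\mathcal{V})$ is closed, opening the door to the multiplier form of Theorem~\ref{th:1}.

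For the forward implication, Theorem~\ref{th:1} supplies $\lambda,\tilde{\lambda}\in\Delta_m$, pairs $(a_j,b_j),(\tilde{a}_j,\tilde{b}_j)\in\mathcal{V}_j$, and scalars $\mu_j,\tilde{\mu}_j\ge 0$ satisfying the stationarity and complementary slackness conditions. Writing $a_j=\overline{a}_j+\delta Z_j^{-1}w_j'$ with $\Vert w_j'\Vert_s\le 1$, I would argue that $b_j=\overline{b}_j$ whenever $\mu_j>0$: complementarity gives $a_j^{\top}\overline{x}=b_j$, while feasibility of $\overline{x}$ at the perturbation $(a_j,\overline{b}_j)\in\mathcal{V}_j$ forces $a_j^{\top}\overline{x}\ge\overline{b}_j\ge b_j$, hence equality. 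For the remaining indices one can simply reset $b_j:=\overline{b}_j$ without affecting any equation. Setting $w_j:=-w_j'$, still with $\Vert w_j\Vert_s\le 1$, the stationarity equation rearranges to
\[
\overline{C}^{\top}\lambda+\sum_{j=1}^{p}\mu_j(\overline{a}_j-\delta Z_j^{-1}w_j)=0_n,
\]
and taking the inner product with $\overline{x}$ while using $\mu_j a_j^{\top}\overline{x}=\mu_j\overline{b}_j$ yields $-\lambda^{\top}\overline{C}\,\overline{x}=\sum_{j=1}^{p}\mu_j\overline{b}_j$. The tilde data are handled identically to produce $\tilde{w}_j$ and the corresponding pair of equations.

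For the converse, given $\lambda,\tilde{\lambda},\mu,\tilde{\mu},w_j,\tilde{w}_j$ as in the statement, I would set $a_j:=\overline{a}_j-\delta Z_j^{-1}w_j$ and $b_j:=\overline{b}_j$, which lie in $\mathcal{V}_j$, and analogously $\tilde{a}_j,\tilde{b}_j$. Dotting the second displayed equation with $\overline{x}$ and subtracting the first gives $\sum_{j=1}^{p}\mu_j(a_j^{\top}\overline{x}-\overline{b}_j)=0$; since $\mu_j\ge 0$ and $\overline{x}\in X$ makes each summand nonnegative, every term vanishes, recovering complementary slackness. A second invocation of Theorem~\ref{th:1} then yields robust weak efficiency. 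The main subtlety, rather than a real obstacle, is confirming that \eqref{SlaterNorm} is the correct dual-norm encoding of \eqref{SlaterLinear}—so that $C(\mathcal{V})$ is closed—and keeping careful track of how complementary slackness in the $b_j$-coordinate pins $b_j$ to $\overline{b}_j$ on the support of $\mu$, and symmetrically for $\tilde{\mu}$.
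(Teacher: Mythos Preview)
Your strategy is sound and runs parallel to the paper's, with a modest but genuine difference in execution. The paper first rewrites $X$ via the dual-norm identity as $\{x:\overline{a}_j^{\top}x-\overline{b}_j-\delta\Vert Z_j^{-1}x\Vert_{s^{\ast}}\ge 0,\ j=1,\ldots,p\}$, then computes $N(X,\overline{x})$ explicitly through Lagrangian duality and subdifferential calculus for this convex system, and finally plugs that formula into the \emph{first} (normal-cone) assertion of Theorem~\ref{th:1}. You instead invoke the \emph{second} (multiplier) assertion of Theorem~\ref{th:1} directly---legitimate, since your check that \eqref{SlaterNorm} is precisely \eqref{SlaterLinear}, together with compact convexity of the $\mathcal{V}_j$, delivers closedness of $C(\mathcal{V})$---and then translate the abstract data $(a_j,b_j)\in\mathcal{V}_j$ into the parametrized $(w_j,\overline{b}_j)$ form. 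The gain of your route is economy: the KKT-type computation is already absorbed in the proof of Theorem~\ref{th:1}, so you need not repeat it; the price is the short extra argument pinning $b_j$ to $\overline{b}_j$ on the support of $\mu$, which the paper's normal-cone derivation handles implicitly.

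One algebraic slip to flag. From Theorem~\ref{th:1} you obtain $\overline{C}^{\top}\lambda=\sum_{j}\mu_j a_j$, and with $a_j=\overline{a}_j-\delta Z_j^{-1}w_j$ this rearranges to $\overline{C}^{\top}\lambda-\sum_{j}\mu_j(\overline{a}_j-\delta Z_j^{-1}w_j)=0$, not the ``$+$'' version you wrote. The ``$+$'' sign in the displayed stationarity equation of the statement (and, correspondingly, the minus in $-\lambda^{\top}\overline{C}\,\overline{x}=\sum_j\mu_j\overline{b}_j$) appears to be a typo in the paper itself: its own normal-cone calculation, combined with $\overline{C}^{\top}\lambda\in -N(X,\overline{x})$, likewise yields the ``$-$'' form, consistent with Theorem~\ref{th:3} and Corollary~\ref{CorolRank1}. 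With that sign corrected, both your forward and converse arguments close cleanly.
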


\begin{proof}
Note that $X$ can be rewritten as
\begin{eqnarray*}
X &=&\left\{ x\in \mathbb{R}^{n}:\overline{a}_{j}^{\top }x-b_{j}+\delta (%
\overline{v}_{j})^{\top }x\geq 0\mbox{ for all }\Vert Z_{j}\overline{v}%
_{j}\Vert _{s}\leq 1,b_{j}\in \lbrack \underline{b}_{j},\overline{b}%
_{j}],j=1,\ldots ,p\right\} \\
&=&\left\{ x\in \mathbb{R}^{n}:\overline{a}_{j}^{\top }x-b_{j}+\delta
(Z_{j}^{-1}\overline{u}_{j})^{\top }x\geq 0\mbox{ for all }\Vert \overline{u}%
_{j}\Vert _{s}\leq 1,b_{j}\in \lbrack \underline{b}_{j},\overline{b}%
_{j}],j=1,\ldots ,p\right\} \\
&=&\left\{ x\in \mathbb{R}^{n}:\overline{a}_{j}^{\top }x-\overline{b}%
_{j}-\delta \Vert Z_{j}^{-1}x\Vert _{s^{\ast }}\geq 0,j=1,\ldots ,p\right\} .
\end{eqnarray*}%
Since $\mathcal{V}_{j}$ is a compact convex set for $\,j=1,\ldots ,p$\ and
the strict robust feasibility\ condition \eqref{SlaterLinear} holds as a
consequence of \eqref{SlaterNorm}, the conclusion will follow from Theorem %
\ref{th:1} if we show that
\begin{equation*}
N(X,\overline{x})=\left\{ u:\exists \mu _{j}\geq 0,\Vert w_{j}\Vert
_{s^{\ast }}\leq 1\mbox { s.t. }-u^{\top }\overline{x}=\sum_{j=1}^{p}\mu _{j}%
\overline{b}_{j}\mbox{ and }u+\sum_{j=1}^{p}\mu _{j}(\overline{a}_{j}-\delta
Z_{j}^{-1}w_{j})=0_{n}\right\} .
\end{equation*}%
To see this, let $u\in N(X,\overline{x})$. Then, $\overline{x}$ is a
solution of the following convex optimization problem:
\begin{equation*}
\min \left\{ -u^{\top }x:\overline{a}_{j}^{\top }x-\overline{b}_{j}-\delta
\Vert Z_{j}^{-1}x\Vert _{s^{\ast }}\geq 0,j=1,\ldots ,p\right\}
\end{equation*}%
As the strict feasibility condition \eqref{SlaterNorm} holds, by the
Lagrangian duality, there exist $\mu _{j}\geq 0,$ $j=1,\ldots ,p,$ such that
\begin{equation*}
-u^{\top }\overline{x}=\min_{x\in \mathbb{R}^{n}}\left\{ (-u^{\top
}x)+\sum_{j=1}^{p}\mu _{j}\left( -\overline{a}_{j}^{\top }x+\overline{b}%
_{j}+\delta \Vert Z_{j}^{-1}x\Vert _{s^{\ast }}\right) \right\} .
\end{equation*}%
As $\overline{x}\in X$, this implies that $\mu _{j}\left( -\overline{a}%
_{j}^{\top }\overline{x}+\overline{b}_{j}+\delta \Vert Z_{j}^{-1}\overline{x}%
\Vert _{s^{\ast }}\right) =0,j=1,\ldots ,p$, and so, the function $%
h(x):=(-u^{\top }x)+\sum_{j=1}^{p}\mu _{j}\left( -\overline{a}_{j}^{\top }x+%
\overline{b}_{j}+\delta \Vert Z_{j}^{-1}x\Vert _{s^{\ast }}\right) $ attains
its minimum on $X$ at $\overline{x}$ and $\min_{x\in \mathbb{R}%
^{n}}h(x)=-u^{\top }\overline{x}$. This implies that $0_{n}\in \partial h(%
\overline{x})$, and so, there exist $w_{j}\in \mathbb{R}^{n}$ with $\Vert
w_{j}\Vert _{s}\leq 1$ such that $w_{j}^{\top }(Z_{j}^{-1}\overline{x}%
)=\Vert Z_{j}^{-1}\overline{x}\Vert _{s^{\ast }}$ and
\begin{equation*}
u+\sum_{j=1}^{p}\mu _{j}\left( \overline{a}_{j}-\delta
Z_{j}^{-1}w_{j}\right) =0_{n}.
\end{equation*}%
This together with $h(\overline{x})=-u^{\top }\overline{x}$ gives us that $%
-u^{\top }\overline{x}=\sum_{j=1}^{p}\mu _{j}\overline{b}_{j}$. Then, we
have
\begin{equation*}
N(X,\overline{x})\subset \left\{ u:\exists \mu _{j}\geq 0,\Vert w_{j}\Vert
_{s^{\ast }}\leq 1\mbox {
s.t. }-u^{\top }\overline{x}=\sum_{j=1}^{p}\mu _{j}\overline{b}_{j}%
\mbox{
and }u+\sum_{j=1}^{p}\mu _{j}\left( \overline{a}_{j}-\delta
Z_{j}^{-1}w_{j}\right) =0_{n}\right\} .
\end{equation*}%
To see the reverse inclusion, let $u\in \mathbb{R}^{n}$ with $-u^{\top }%
\overline{x}=\sum_{j=1}^{p}\mu _{j}\overline{b}_{j}\mbox{ and }%
u+\sum_{j=1}^{p}\mu _{j}\left( \overline{a}_{j}-\delta
Z_{j}^{-1}w_{j}\right) =0_{n}$ for some $\mu _{j}\geq 0,\Vert w_{j}\Vert
_{s^{\ast }}\leq 1$. Then, for all $x\in X$,
\begin{eqnarray*}
u^{\top }(x-\overline{x})=u^{\top }x+\sum_{j=1}^{p}\mu _{j}\overline{b}_{j}
&=&-\left( \sum_{j=1}^{p}\mu _{j}(\overline{a}_{j}-\delta
Z_{j}^{-1}w_{j})\right) ^{\top }x+\sum_{j=1}^{p}\mu _{j}\overline{b}_{j} \\
&=&\sum_{j=1}^{p}\mu _{j}\big(-\overline{a}_{j}^{\top }x+\overline{b}%
_{j}+\delta \left( Z_{j}^{-1}w_{j}\right) ^{\top }x\big) \\
&\leq &\sum_{j=1}^{p}\mu _{j}\big(-\overline{a}_{j}^{\top }x+\overline{b}%
_{j}+\delta \Vert Z_{j}^{-1}x\Vert _{s^{\ast }}\big)\leq 0,
\end{eqnarray*}%
where the inequality follows from $\Vert w_{j}\Vert _{s^{\ast }}=\max_{\Vert
u\Vert _{s}\leq 1}w_{j}^{\top }u$ (and hence, $w_{j}^{\top }v\leq \Vert
w_{j}\Vert _{s^{\ast }}\Vert v\Vert _{s}\leq \Vert v\Vert _{s}$ for all $%
v\in \mathbb{R}^{n}$). So, $u\in N(X,\overline{x})$ and hence, the
conclusion follows.
\end{proof}

\smallskip

Theorem \ref{NA} shows that one can determine whether a robust feasible
point $\overline{x}$ under norm data uncertainty is a robust weakly
efficient solution or not by solving finitely many $s$th-order cone systems
(that is, linear equations where the variable lies in the ball determined by
the $\Vert \cdot \Vert_{s}$-norm) as long as the strict feasibility
condition \eqref{SlaterNorm} is satisfied.

\subsection{Ellipsoidal constraint data uncertainty}

In this subsection we consider the case where the constraint data are
uncertain and belong to the ellipsoidal constraint data uncertainty sets
\begin{equation}
\mathcal{V}_{j}=\{\overline{a}_{j}^{0}+\sum_{l=1}^{q_{j}}v_{j}^{l}\overline{a%
}_{j}^{l}:\Vert (v_{j}^{1},\dots ,v_{j}^{q_{j}})\Vert \leq 1\}\times \lbrack
\underline{b}_{j},\overline{b}_{j}],  \label{VEllipsoid}
\end{equation}
where $\overline{a}_{j}^{l}\in \mathbb{R}^{n}$, $l=0,1,\ldots ,q_{j}$, $%
q_{j}\in \mathbb{N}$ and $\underline{b}_{j},\overline{b}_{j}\in \mathbb{R},$
$j=1,...,p$.

\begin{theorem}
Let $\mathcal{V}_{j},$ $\,j=1,\ldots ,p,$ be as in \eqref{VEllipsoid} and
suppose that there exists $x_{0}\in \mathbb{R}^{n}$ such that%
\begin{equation}
(\overline{a}_{j}^{0})^{\top }x_{0}-\overline{b}_{j}-\Vert \big((\overline{a}%
_{j}^{1})^{\top }x_{0},\ldots ,(\overline{a}_{j}^{q_{j}})^{\top }x_{0}\big)%
\Vert >0,\,j=1,\ldots ,p.  \label{SlaterEllipsoid}
\end{equation}%
Then, a point $\overline{x}\in X$ is a robust weakly efficient solution of $%
(P_{C})$ if and only if there exist $\lambda ,\tilde{\lambda}\in \Delta _{m}$%
, $\mu ,\tilde{\mu}\in \mathbb{R}_{+}^{p}$ and $w,\tilde{w}\in \mathbb{R}^{n}
$ with $\Vert w\Vert \leq 1$ and $\Vert \tilde{w}\Vert \leq 1$ such that
\begin{equation*}
-\lambda ^{\top }\overline{C}\overline{x}=\sum_{j=1}^{p}\mu _{j}\overline{b}%
_{j}\quad \mbox{and}\quad -\overline{C}^{\top }\lambda -\sum_{j=1}^{p}\mu
_{j}(\overline{a}_{j}^{0}-y_{j})=0_{m}
\end{equation*}%
and
\begin{equation*}
-\lambda ^{\top }(\overline{C}+uv^{\top })\overline{x}=\sum_{j=1}^{p}\mu _{j}%
\overline{b}_{j}\quad \mbox{and}\quad -(\overline{C}+uv^{\top })^{\top
}\lambda -\sum_{j=1}^{p}\mu _{j}(\overline{a}_{j}^{0}-y_{j})=0_{m},
\end{equation*}%
where $y_{j}=\big((\overline{a}_{j}^{1})^{\top }w,\ldots ,(\overline{a}%
_{j}^{q_{j}})^{\top }w\big)^{\top }$.
\end{theorem}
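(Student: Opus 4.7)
The plan is to mirror the three-step argument that yielded Theorem \ref{NA}: collapse the robust semi-infinite feasible set $X$ to a finite system of convex inequalities by evaluating the worst case over each ellipsoid, observe that \eqref{SlaterEllipsoid} is precisely the Slater condition \eqref{SlaterLinear} for this reformulation so that Theorem \ref{th:1} is applicable, and compute the normal cone $N(X,\overline{x})$ via Lagrangian duality so as to substitute into the two normal-cone inclusions furnished by Theorem \ref{th:1}.

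For the reformulation step, using $\max_{\|v\|\leq 1}\sum_{l=1}^{q_j} v_l\alpha_l=\|(\alpha_1,\ldots,\alpha_{q_j})\|$, the semi-infinite block of robust constraints associated with $\mathcal{V}_j$ collapses to the single convex inequality
$$
(\overline{a}_j^0)^\top x-\overline{b}_j-\bigl\|\bigl((\overline{a}_j^1)^\top x,\ldots,(\overline{a}_j^{q_j})^\top x\bigr)\bigr\|\geq 0,\qquad j=1,\ldots,p.
$$
Thus $X$ is described by finitely many convex inequalities, and \eqref{SlaterEllipsoid} provides a strict feasibility point for this reformulation, yielding \eqref{SlaterLinear}. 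Since each $\mathcal{V}_j$ is compact convex, the characteristic cone $C(\mathcal{V})$ is closed by the observation preceding Theorem \ref{th:1}, so the second half of that theorem is applicable.

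For the normal cone computation, if $u\in N(X,\overline{x})$ then $\overline{x}$ minimizes $-u^\top x$ over $X$, and by Slater and standard convex Lagrangian duality there exist $\mu_j\geq 0$ with complementarity such that the Lagrangian
$$h(x):=-u^\top x+\sum_{j=1}^p\mu_j\bigl(-(\overline{a}_j^0)^\top x+\overline{b}_j+\|A_j x\|\bigr),\qquad A_j x:=\bigl((\overline{a}_j^1)^\top x,\ldots,(\overline{a}_j^{q_j})^\top x\bigr)^\top,$$
attains its unconstrained minimum at $\overline{x}$. Applying $0_n\in\partial h(\overline{x})$ together with $\partial\|\cdot\|(y)=\{z:\|z\|\leq 1,\,z^\top y=\|y\|\}$ produces unit-ball vectors that yield the stationarity equation of the theorem, and the scalar equation $-u^\top\overline{x}=\sum_j\mu_j\overline{b}_j$ follows from $\mu_j$-complementarity combined with the support identity $z^\top A_j\overline{x}=\|A_j\overline{x}\|$. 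The reverse inclusion is a routine Cauchy--Schwarz estimate exactly as in the proof of Theorem \ref{NA}. Substituting the resulting formula for $N(X,\overline{x})$ into $\overline{C}^\top\lambda\in -N(X,\overline{x})$ and $(\overline{C}+uv^\top)^\top\tilde\lambda\in -N(X,\overline{x})$ then yields both sets of identities in the conclusion.

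The main technical nuisance I expect is the non-differentiable case $A_j\overline{x}=0$, where $\partial\|\cdot\|(0)$ is the entire unit ball; this is handled by selecting any admissible subgradient, and the rest of the argument proceeds unchanged. A secondary bookkeeping check is to re-express the per-constraint subgradient data $z_j\in\mathbb{R}^{q_j}$ in the compact ``$y_j$'' form given in the statement, which is a pure change of variables and introduces no new ideas.
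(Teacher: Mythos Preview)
Your proposal is correct and follows essentially the same route as the paper's own proof: reduce $X$ to finitely many convex inequalities by taking the worst case over each ellipsoid, invoke the Slater condition \eqref{SlaterEllipsoid} to apply Theorem \ref{th:1}, and compute $N(X,\overline{x})$ via Lagrangian duality and the subdifferential of the Euclidean norm, with the reverse inclusion handled by the Cauchy--Schwarz inequality. The only cosmetic difference is that the paper invokes the first (normal-cone) assertion of Theorem \ref{th:1} directly rather than its KKT reformulation, but the substance is identical.
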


\begin{proof}
Note that $X$ can be rewritten as
\begin{eqnarray*}
X &=&\{x\in \mathbb{R}^{n}:(\overline{a}_{j}^{0})^{\top
}x-b_{j}+\displaystyle \sum\nolimits_{l=1}^{q_{j}}v_{j}^{l}(\overline{a}_{j}^{l})^{\top
}x)\geq 0\mbox{ for all } \\
&&\Vert (v_{j}^{1},\dots v_{j}^{q_{j}})\Vert \leq 1,b_{j}\in \lbrack
\underline{b}_{j},\overline{b}_{j}],\,j=1,\ldots ,p\medskip \big)\} \\
&=&\left\{ x\in \mathbb{R}^{n}:(\overline{a}_{j}^{0})^{\top }x-\overline{b}%
_{j}-\Vert \big((\overline{a}_{j}^{1})^{\top }x,\ldots ,(\overline{a}%
_{j}^{q_{j}})^{\top }x\big)\Vert \geq 0,\,j=1,\ldots ,p\right\} .
\end{eqnarray*}%
The conclusion will follow from Theorem \ref{th:1} if we show that
\begin{equation*}
N(X,\overline{x})=\left\{ u\in \mathbb{R}^{n}:\exists \mu _{j}\geq 0,\Vert
w\Vert \leq 1\mbox { s.t.
}-u^{\top }\overline{x}=\sum_{j=1}^{p}\mu _{j}\overline{b}_{j}\mbox{ and }%
-u-\sum_{j=1}^{p}\mu _{j}(\overline{a}_{j}^{0}-y_{j})=0_{m}\right\} .
\end{equation*}%
To see this, let $u\in N(X,\overline{x})$. Then, $\overline{x}$ is a
solution of the following convex optimization problem:
\begin{equation*}
\min \left\{ -u^{\top }x:(\overline{a}_{j}^{0})^{\top }x-\overline{b}%
_{j}-\Vert \big((\overline{a}_{j}^{1})^{\top }x,\ldots ,(\overline{a}%
_{j}^{q_{j}})^{\top }x\big)\Vert \geq 0,j=1,\ldots ,p\right\} .
\end{equation*}%
As the strict feasibility condition \eqref{SlaterEllipsoid} holds, by the
Lagrangian duality, there exist $\mu _{j}\geq 0,\,j=1,\ldots ,p,$ such that
\begin{equation*}
-u^{\top }\overline{x}=\min_{x\in \mathbb{R}^{n}}\left\{ (-u^{\top
}x)+\sum_{j=1}^{p}\mu _{j}(-(\overline{a}_{j}^{0})^{\top }x+\overline{b}%
_{j}+\Vert \big((\overline{a}_{j}^{1})^{\top }x,\ldots ,(\overline{a}%
_{j}^{q_{j}})^{\top }x\big)\Vert )\right\} .
\end{equation*}%
As $\overline{x}\in X$, this implies that $\mu _{j}\big((\overline{a}%
_{j}^{0})^{\top }\overline{x}-\overline{b}_{j}-\Vert \big((\overline{a}%
_{j}^{1})^{\top }\overline{x},\ldots ,(\overline{a}_{j}^{q_{j}})^{\top }%
\overline{x}\big)\Vert \big)=0,j=1,\ldots ,p,$ and so, the function $%
h(x):=(-u^{\top }x)+\sum_{j=1}^{p}\mu _{j}(-(\overline{a}_{j}^{0})^{\top }x+%
\overline{b}_{j}+\Vert \big((\overline{a}_{j}^{1})^{\top }x,\ldots ,(%
\overline{a}_{j}^{q_{j}})^{\top }x\big)\Vert )$ attains its minimum at $%
\overline{x}$ and $\min_{x\in \mathbb{R}^{n}}h(x)=-u^{\top }\overline{x}$.
This implies that $0_{n}\in \partial h(\overline{x})$, and so, there exists $%
w\in \mathbb{R}^{n}$ with $\Vert w\Vert \leq 1$ such that
\begin{equation*}
-u^{\top }\overline{x}=\sum_{j=1}^{p}\mu _{j}\overline{b}_{j}\mbox{ and }%
-u-\sum_{j=1}^{p}\mu _{j}\overline{a}_{j}^{0}+\sum_{j=1}^{p}\mu
_{j}y_{j}=0_{n},
\end{equation*}%
where $y_{j}=\big((\overline{a}_{j}^{1})^{\top }w,\ldots ,(\overline{a}%
_{j}^{q_{j}})^{\top }w\big)^{\top }$. Then, we have
\begin{equation*}
N(X,\overline{x})\subset \left\{ u\in \mathbb{R}^{n}:\exists \mu _{j}\geq
0,\Vert w\Vert \leq 1\mbox { s.t. }-u^{\top }\overline{x}=\sum_{j=1}^{p}\mu
_{j}\overline{b}_{j}\mbox{ and }-u-\sum_{j=1}^{p}\mu _{j}(\overline{a}%
_{j}^{0}-y_{j})=0_{n}\right\} .
\end{equation*}%
To see the reverse inclusion, let $u\in \mathbb{R}^{n}$ be such that $%
-u^{\top }\overline{x}=\sum_{j=1}^{p}\mu _{j}\overline{b}_{j}\mbox{ and }%
-u-\sum_{j=1}^{p}\mu _{j}(\overline{a}_{j}^{0}-y_{j})=0_{n}$ for some $\mu
_{j}\geq 0,$ $j=1,...,p,$ and $\Vert w\Vert \leq 1$. Then, for all $x\in X$,
\begin{equation*}
u^{\top }(x-\overline{x})=\sum_{j=1}^{p}\mu _{j}\overline{b}%
_{j}-\sum_{j=1}^{p}\mu _{j}(\overline{a}_{j}^{0}-y_{j})^{\top }x\leq
\sum_{j=1}^{p}\mu _{j}(-(\overline{a}_{j}^{0})^{\top }x+\overline{b}%
_{j}+\Vert \big((\overline{a}_{j}^{1})^{\top }x,\ldots ,(\overline{a}%
_{j}^{q_{j}})^{\top }x\big)\Vert )\leq 0.
\end{equation*}%
Thus, $u\in N(X,\overline{x})$ and so, the conclusion follows.
\end{proof}

\smallskip

The above robust solution characterization under the constraint ellipsoidal
data uncertainty shows that one can determine whether a robust feasible
point is a robust weakly efficient solution point or not by solving finitely
many second order cone systems as long as the strict robust feasibility
condition \eqref{SlaterEllipsoid} is satisfied.\bigskip

Finally, it should be noted that there are other approaches in defining
robust solutions for uncertain multiobjective optimization when the data
uncertainty $\mathcal{U}\subset \mathbb{R}^{m\times n}$ in the objective
matrix is a columnwise objective data uncertainty, that is, $\mathcal{U}%
=\prod_{i=1}^{m}\mathcal{U}_{i}$ where $\mathcal{U}_{i}\subset \mathbb{R}%
^{n} $. In this case, one can define a robust solution of the uncertain
multi-objective optimization problem as the solution of the following
deterministic multiobjective optimization problem
\begin{equation*}
\text{V-}\min \left\{ \left( \max_{c_{1}\in \mathcal{U}_{1}}c_{1}^{\top
}x,\ldots ,\max_{c_{m}\in \mathcal{U}_{m}}c_{m}^{\top }x\right) :a_{j}^{\top
}x\geq b_{j},\;\forall (a_{j},b_{j})\in \mathcal{V}_{j},\,j=1,\ldots
,p\right\} .
\end{equation*}%
This approach has been recently examined in the paper \cite{GJLP} for
uncertain multiobjective optimization with semi-infinite constraints under
columnwise objective data uncertainty.

\section*{Acknowledgments}

M.A. Goberna would like to thank the coauthors of this paper for their
hospitality during his stay in June/July 2013 at the School of Mathematics
and Statistics of the University of New South Wales in Sydney.

\end{document}